\newtheorem{thm}[equation]{Theorem}
\newtheorem{lem}[equation]{Lemma}
\newcommand{\thmref}[1]{Theorem~\ref{#1}}
\newcommand{\lemref}[1]{Lemma~\ref{#1}}
\newcommand{\secref}[1]{Section~\ref{#1}}
\numberwithin{equation}{section}
\renewcommand\a{\alpha}
\renewcommand\b{\beta}
\newcommand\g{\gamma}
\renewcommand\d{\delta}
\newcommand\e{\varepsilon}
\renewcommand\l{\lambda}
\renewcommand\L{\Lambda}
\newcommand\D{\Delta}
\renewcommand\D{\Delta}
\newcommand\G{\Gamma}
\newcommand\f{\frac}
\newcommand\smallf[2]{{\textstyle{\frac{#1}{#2}}}}
\newcommand{\Z}{{\mathbb{Z}}}
\newcommand{\R}{{\mathbb{R}}}
\newcommand{\C}{{\mathbb{C}}}
\newcommand{\U}{{\mathbb{H}}}
\renewcommand\i{^{-1}}
\renewcommand\({\left(}
\renewcommand\){\right)}
\newcommand{\ttwo}[4]{
\(\begin{smallmatrix}{#1} & {#2}
\\ {#3} & {#4} \end{smallmatrix}\)}
\newcommand{\tthree}[9]{\(\begin{smallmatrix}{#1} & {#2} & {#3}
\\ {#4} & {#5} & {#6} \\ {#7} & {#8} & {#9} \end{smallmatrix}\)}
\newcommand{\lietwo}[4]{
\left[\begin{smallmatrix}{#1} & {#2}
\\ {#3} & {#4} \end{smallmatrix}\right]}
\newcommand{\liethree}[9]{\left[\begin{smallmatrix}{#1} & {#2} & {#3}
\\ {#4} & {#5} & {#6} \\ {#7} & {#8} & {#9} \end{smallmatrix}\right]}
\newcommand{\sgn}{\operatorname{sgn}}
\newcommand\srel[2]{\begin{smallmatrix} {#1} \\ {#2} \end{smallmatrix}}
\newcommand{\gobble}[1]{}
  \newcommand{\rangeref}[2]{%
    \ref{#1}--\afterassignment\gobble\fam 0\ref{#2}%
  }
\def\imod#1{\allowbreak\mkern5mu({\operator@font mod}\,#1)}
\begin{document}

\title{Weights, raising and lowering operators, and $K$-types for automorphic forms on $SL(3,\R)$}

\date{February 28, 2017}

\author{Jack Buttcane\thanks{Supported by NSF grant DMS-1601919.} \\University at Buffalo \\ \tt{buttcane@buffalo.edu} \\
and \\Stephen D. Miller\thanks{Supported by NSF grant DMS-1500562.}\\
Rutgers University\\
\tt{miller@math.rutgers.edu}}

\maketitle

\begin{abstract}
 We give a fully explicit description of Lie algebra derivatives (generalizing raising and lowering operators) for representations of $SL(3,\R)$ in terms of a basis of Wigner functions.  This basis is natural from the point of view of principal series representations, as well as computations in the analytic theory of automorphic forms (e.g., with Whittaker functions).  The method is based on the Clebsch-Gordan multiplication rule for Wigner functions, and applies to other Lie groups whose maximal compact subgroup is isogenous to a product of $SU(2)$ and $U(1)$ factors.    As an application, we give a complete and explicit description of the $K$-type  structure of  certain cohomological representations.
\end{abstract}

\section{Introduction}\label{sec:intro}

One of the most prominent features of the classical  theory of holomorphic modular forms is the {\em weight}, i.e., the integer (or sometimes fraction) $k$ appearing in the transformation law
\begin{equation}\label{weightk}
  f(z) \ \ = \ \ (cz+d)^{-k}\,f\(\f{az+b}{cz+d}\),
\end{equation}
where $f$ is a modular   form and $\ttwo abcd$ is an element of its discrete automorphy group.
Maass developed a parallel theory for non-holomorphic forms of various weights, including his well-known explicit operators which increase and lower the weight by 2.
  Gelfand-Graev (see \cite{Gelfand-Graev}) gave a reformulation of modular and Maass forms as functions on the group $SL(2,\R)$,
 in which (\ref{weightk}) is interpreted representation-theoretically as an $SO(2)$-equivariance condition (see (\ref{SO2isotypic1})), and Maass's raising and lowering operators naturally correspond to Lie algebra derivatives (see (\ref{sl2updownaction}) and  (\ref{sl2diffops3})).  Those same Lie algebra derivatives are   decisive tools in Bargmann's work \cite{Bargmann} describing the irreducible unitary representations of $SL(2,\R)$ via their $K$-types.

The goal of this paper is to describe a parallel, explicit theory of raising and lowering operators for automorphic forms on   $SL(3,\R)$.  The papers \cite{howe,miyazaki} give a description of Lie algebra derivatives in principal series representations for $SL(3,\R)$.  In this paper we instead use a very explicit basis provided by Wigner functions \cite{bied} on the maximal compact subgroup $K=SO(3)$, from which one can then study precise topics in the analytic theory of automorphic forms (such as Whittaker functions, pursued in \cite{buttcane}).  Indeed, our motivation is that restriction of a $K$-isotypic automorphic form on $SL(3,\R)$ to $K$ is itself a sum of Wigner functions, which justifies the importance of this basis.

As do the methods of \cite{howe,miyazaki},
our method  also allows the computation of the
composition series of principal series representations of $SL(3,\R)$, as well as the action of intertwining operators on $K$-types.   Moreover, it applies (with straightforward modifications) to many  other real reductive Lie groups, in particular  those whose maximal compact subgroup is isogenous to a product of $SU(2)$'s and tori (such as $G_2$ and $SO(4,4)$, pursued in  \cite{Zhang,GLMZ}), and indeed to any group satisfying a sufficiently-explicit analog of the  Clebsch-Gordan multiplication formula (\ref{CG1}).

We will use the notation $G$ to denote $SL(r,\R)$, where $r$ will equal 3 except in \secref{sec:SL2R} (where $r=2$), and $\frak g$ to denote the complex Lie algebra $\frak{sl}_r$ (consisting of all traceless $r\times r$ complex matrices).  We write $N\subset G$ for the subgroup of unit upper triangular  $r\times r$  matrices, which is a  maximal unipotent subgroup of $G$, and $\frak n$ for its complexified Lie algebra (which consists of all strictly upper triangular complex  $r\times r$ matrices).  The subgroup $A\subset G$ consists of all nonsingular,  diagonal  $r\times r$  matrices with positive entries and determinant 1; it is  the connected component of a maximal abelian subgroup of $G$. The complexified Lie algebra $\frak a$ of $A$ consists of all traceless diagonal $r\times r$  matrices with complex entries.  Finally, $K=SO(r)$ is a maximal compact subgroup of $G$, and its complexified Lie algebra $\frak k$ consists of all antisymmetric complex $r\times r$ matrices.  The Iwasawa decomposition asserts that the map $(n,a,k)\mapsto nak$ from $N\times A \times K\rightarrow G$ is a diffeomorphism; at the level of Lie algebras, $\frak g={\frak{n}}\oplus{\frak{a}}\oplus{\frak{k}}$.

Our main result gives the explicit action of the Lie algebra on the basis of Wigner functions in a principal series representation (see (\ref{basisforlisotypicprincseries})).  The action of $\frak k$ is classical and described in (\ref{so3differentiation}); the following result describes the action of a basis of the five-dimensional complement $\frak p=\{X\in{\frak{g}}|X^t=X\}$ of $\frak k$ in $\frak g$:
\begin{thm}\label{thm:intro}
Let $Z_{-2}$, $Z_{-1}$, $Z_{0}$, $Z_1$, and $Z_2$ be as defined in (\ref{sl3liealg}), $\L^{(k)}_{j}(\l,\ell,m_1)$ be as defined in (\ref{Lambda}), and let $q_{k,j}(\ell,m)$ denote the Clebsch-Gordan coefficient $\langle
2\,k\,\ell\,m|(\ell+j)\,(k+m)
\rangle$  (see (\ref{CG3}) for an explicit description in the cases of relevance).  Set $c_{-2}=c_2=1$ and $c_0=\sqrt{\f 23}$. Let
$V_{\lambda,\delta}$ be a principal series representation of $SL(3,\R)$ (see (\ref{princsl3}))
and let $v_{\ell,m_1,m_2}$ denote its elements defined in (\ref{basisforlisotypicprincseries}).
Then
\begin{multline}\label{introthm1}
   \pi(Z_n)v_{\ell,m_1,m_2}  \ \ = \\  \sum_{\srel{-2\le j \le 2}{k\,\in\,\{-2,0,2\}}} c_k \,  q_{k,j}(\ell,m_1)\, q_{n,j}(\ell,m_2) \, \Lambda^{(k)}_j(\lambda,\ell,m_1) \, v_{\ell+j,m_1+k,m_2+n}\,.
\end{multline}
\end{thm}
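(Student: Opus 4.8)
The plan is to realize the principal series $V_{\lambda,\delta}$ concretely as functions on $K=SO(3)$, so that the Wigner basis $v_{\ell,m_1,m_2}$ becomes literally a basis of matrix coefficients of irreducible $SO(3)$-representations, and then compute $\pi(Z_n)v_{\ell,m_1,m_2}$ by hand. First I would recall that under the Iwasawa decomposition $g=nak$, a function in the principal series transforms by a character of $NA$ on the left, so it is determined by its restriction to $K$; on $K$ the basis vector $v_{\ell,m_1,m_2}$ is (up to normalization) the Wigner function $D^\ell_{m_1,m_2}$, and the right regular action of $\frak k=\frak{so}(3)$ on these is the classical formula (\ref{so3differentiation}). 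The point of the theorem is that $Z_n$ lies in $\frak p$, not $\frak k$, so differentiating $v_{\ell,m_1,m_2}$ by $Z_n$ — i.e. forming $\frac{d}{dt}\pi(\exp(tZ_n))v_{\ell,m_1,m_2}\big|_{t=0}$ — requires pushing the $\frak p$-direction back through the Iwasawa decomposition. Writing $\exp(tZ_n)k = n(t)a(t)k(t)$ and differentiating at $t=0$ produces (a) an infinitesimal $N$-part and an infinitesimal $A$-part, which act on the principal series vector by the differential of the inducing character — this is where the spectral parameter $\lambda$ and the shifts in $\ell$ get encoded, ultimately producing the factor $\Lambda^{(k)}_j(\lambda,\ell,m_1)$ — and (b) an infinitesimal $K$-part $k'(0)\in\frak k$, which acts by right translation on Wigner functions.

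The key structural input is that the $\frak p$-component, viewed as a representation of $K=SO(3)$ under the adjoint action, is the $5$-dimensional irreducible representation $V_2$ (spin $2$); this is exactly why the indices $k\in\{-2,0,2\}$ and the weights $Z_{-2},\dots,Z_2$ appear, and why the coefficients $c_{-2}=c_2=1$, $c_0=\sqrt{2/3}$ show up — they are the normalization of a weight basis of $V_2$ inside $\frak{sl}_3$. So the second step is to express the composite operation "multiply the function $D^\ell_{m_1,m_2}$ on $K$ by the matrix entries coming from the $\frak p$-shift, then project" in terms of the Clebsch–Gordan multiplication rule (\ref{CG1}) for Wigner functions: a product $D^2_{k,j}\,D^\ell_{m_1,m_2}$ decomposes as $\sum_{j'} \langle 2\,k\,\ell\,m_1 \mid (\ell+j)\,(k+m_1)\rangle \langle 2\,j\,\ell\,m_2\mid\cdots\rangle\, D^{\ell+j}_{m_1+k,m_2+n}$, which is precisely the combination $c_k\,q_{k,j}(\ell,m_1)\,q_{n,j}(\ell,m_2)$ in the statement. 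The Wigner–Eckart philosophy guarantees that the full operator $\pi(Z_n)$, being an intertwiner of the $\frak p$-copy of $V_2$ with the span of $K$-types, must act by these two Clebsch–Gordan coefficients times a "reduced matrix element" depending only on $\ell,\lambda,m_1$ (through the first coefficient's quantum numbers) and not on $m_2$; so the real content is computing that reduced matrix element and checking it equals $\Lambda^{(k)}_j(\lambda,\ell,m_1)$.

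Concretely the steps are: (1) fix an explicit isomorphism of $V_{\lambda,\delta}$ with a space of functions on $K$ and pin down the normalizations in (\ref{basisforlisotypicprincseries}); (2) for each of the five $Z_n$ write $\exp(tZ_n)k=n(t)a(t)k(t)$ and extract $n'(0)\in\frak n$, the $\frak a$-component $H_n(0)$, and $k'(0)\in\frak k$ as explicit functions on $K$ (equivalently, as $SO(3)$-finite functions, hence finite sums of Wigner functions — in fact matrix entries of $V_1$ or $V_2$); (3) apply $\pi$: the $\frak n$ and $\frak a$ pieces act by scalars built from $\lambda$ and $\delta$, producing a function on $K$ that is a Wigner-function multiple of $D^\ell_{m_1,m_2}$, while the $\frak k$ piece acts by the known $\frak{so}(3)$ formula; (4) invoke the Clebsch–Gordan rule (\ref{CG1}) to re-expand every resulting product of Wigner functions in the basis $\{D^{\ell+j}_{m_1+k,m_2+n}\}$, collect terms, and identify the coefficient of each basis vector; (5) verify the collected coefficient matches $c_k\,q_{k,j}(\ell,m_1)\,q_{n,j}(\ell,m_2)\,\Lambda^{(k)}_j(\lambda,\ell,m_1)$, using the explicit Clebsch–Gordan values (\ref{CG3}) and the definition (\ref{Lambda}) of $\Lambda^{(k)}_j$. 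I expect the main obstacle to be step (2) together with the bookkeeping in step (4): the Iwasawa projection of $\exp(tZ_n)k$ is genuinely group-theoretic and must be carried out carefully enough to see which Wigner functions of degree $1$ and $2$ enter, and then the resulting triple products of Clebsch–Gordan coefficients have to be algebraically simplified — using orthogonality and the recursion relations for the coefficients — down to the clean product form in (\ref{introthm1}); getting the $\lambda$-dependence and the various square-root normalizations to line up exactly with (\ref{Lambda}) is where the delicate computation lies, even though each individual manipulation is routine.
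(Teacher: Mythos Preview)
Your proposal is correct and follows essentially the same route as the paper: decompose $kZ_nk^{-1}$ into its $\frak n\oplus\frak a\oplus\frak k$ components, observe that the resulting coefficient functions on $K$ are degree-$2$ Wigner functions (only $V_2$ enters, not $V_1$), apply the transformation law for the $\frak n,\frak a$ parts and the classical $\frak{so}(3)$ formulas for the $\frak k$ part, and then use the Clebsch--Gordan product rule together with a single recursion identity for the coefficients to collapse everything to the stated form. The only organizational difference is that the paper first proves the identity for the individual Wigner functions $D^\ell_{m_1,m_2}$ inside a parity-free line bundle $\mathcal L_\lambda$ and then passes to $v_{\ell,m_1,m_2}=D^\ell_{m_1,m_2}+(-1)^{\delta_1+\delta_3+\ell}D^\ell_{-m_1,m_2}$ via the symmetry $q_{k,j}(\ell,m)=(-1)^j q_{-k,j}(\ell,-m)$, which you should build in rather than treating $v_{\ell,m_1,m_2}$ as a single Wigner function.
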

\noindent
The papers \cite{howe,miyazaki} give comparable results for differently-presented bases.

Section~\ref{sec:SL2R} contains a review of Lie algebra derivatives, $K$-types, and composition series of principal representations for the group $SL(2,\R)$.  Section~\ref{sec:wignerso3} gives background on Wigner functions for $SO(3)$, and Section~\ref{sec:princseries} describes principal series representations of $SL(3,\R)$ in terms of Wigner functions.   Theorem~\ref{thm:intro}  is proved in Section~\ref{sec:gKmodulestructure}, along with a description of the operators $U_j$ (\ref{Ujdef}), which are somewhat analogous to raising and lowering operators.  As an application, in Section~\ref{sec:examples} we describe the composition series of some principal series representations relevant to automorphic forms, in terms of Wigner functions.  Finally, Section~\ref{sec:diffopformulas} gives formulas for $\pi(Z_{\pm 2})$ and $\pi(Z_0)$ from Theorem~\ref{thm:intro} in terms of differential operators on the symmetric space $SL(3,\R)/SO(3,\R)$.

We wish to thank Jeffrey Adams, Michael B. Green, Roger Howe, Anthony Knapp, Gregory Moore, Siddhartha Sahi, Peter Sarnak, Wilfried Schmid, Pierre Vanhove, Ramarathnam Venkatesan, David Vogan, Zhuohui Zhang, and Greg Zuckerman for their helpful discussions.

\section{$SL(2,\R)$ background}\label{sec:SL2R}

This section contains a summary of the types of results we prove for $SL(3,\R)$, but in the simpler and classical context of $SL(2,\R)$.
For any function  $f(z)$   on the complex upper half plane $\U$, Gelfand-Graev (see \cite{Gelfand-Graev})  associated a function $\phi$ on $G=SL(2,\R)$ by the formula
\begin{equation}\label{gglifting}
    \phi\ttwo abcd \ \ = \ \ (ci+d)^{-k}\,f\(\f{ai+b}{ci+d}\).
\end{equation}
If $f$ satisfies (\ref{weightk}) for all $\ttwo abcd$ lying in a discrete subgroup $\G\subset G$, then $\phi(\gamma g)=\phi(g)$ for all $\g\in \G$.  Finally, since $K=SO(2,\R)$ fixes $z=i$  under its action by fractional linear transformations on $\U$,   the weight  condition (\ref{weightk}) for $f$ becomes the $K$-isotypic condition
\begin{equation}\label{SO2isotypic1}
  \phi(gk) \ \ = \ \ \chi(k)\,\phi(g)\,,
\end{equation}
for $\phi$, where $\chi$ is a character of the group $K=SO(2,\R)$.\footnote{More precisely, $\chi$ is the character $\chi_{-k}$ defined in (\ref{charactersofSO2}).}

Thus Gelfand-Graev converted the study of modular forms to the study of functions on $\G\backslash G$ which transform on the right by a character of the maximal compact subgroup $K$.
By a standard reduction, it suffices to study functions $\phi$ which lie in an irreducible automorphic representation, in particular a space that is invariant  under the right translation action $[\rho(g)\phi](x)=\phi(xg)$.  More precisely, one assumes the existence of an irreducible representation $(\pi,V)$ of $G$ and an embedding $j$ from $V$ into a space of functions on $\Gamma\backslash G$ which intertwines the two representations in the sense that
\begin{equation}\label{jandpi}
j(\pi(g)v) \ \ = \ \ \rho(g)j(v)\,.
\end{equation}
Frequently an $L^2$-condition is imposed on $j(v)(g)$, though that will not be necessary here.

Under the assumption that $j(v)=\phi$, condition (\ref{SO2isotypic1}) can be elegantly restated as
\begin{equation}\label{SO2isotypic2}
  \pi(k)v \ \  =  \ \ \chi(k)v\,,
\end{equation}
that is, $v$ is an isotypic vector for the character $\chi$ of $K=SO(2,\R)$.  The representation theory of compact Lie groups was completely determined by Weyl, and in the present context of $K=SO(2,\R)$ the irreducibles are simply the characters
\begin{equation}\label{charactersofSO2}
  \chi_\ell(k_\theta) \ \ = \ \ e^{i\ell\theta}\, , \ \ \ k_\theta \ = \ \ttwo{\cos\theta}{-\sin\theta}{\sin\theta}{\cos\theta}\ \in \ SO(2,\R)\,,
\end{equation}
as $\ell$ ranges over $\Z$.

 Writing
 \begin{equation}\label{Vell}
 V_\ell \ \ = \ \ \{v\in V\ | \ \pi(k)v\, =\, \chi_\ell(k)v\}\,,
\end{equation}
the direct sum $\oplus_{\ell\in\Z}V_\ell$ forms the Harish-Chandra module of {\em $K$-finite vectors} for $\pi$, i.e., those vectors whose $K$-translates span  a finite-dimensional subspace.  In general, the Harish-Chandra module can be defined in terms of the decomposition of $\pi$'s restriction to $K$ into irreducible representations.
  In this particular situation, $K$-finite vectors correspond to trigonometric polynomials, and $K$-isotypic vectors correspond to trigonometric monomials.  The full representation $V$ is a completion of this space in terms of classical Fourier series.  For example, its smooth vectors $V^\infty$ (those vectors for which the map $g\mapsto \pi(g)v$ is a smooth function on the manifold $G$) correspond to Fourier series whose coefficients decay faster than the reciprocal of any polynomial.

  By definition, the subspace $V^\infty$ is preserved by Lie algebra derivatives
  \begin{equation}\label{liederiv}
    \pi(X)v \ \ := \ \ \lim_{t\,\rightarrow \,0} \f{\pi(e^{tX})v-v}{t}\,, \ \ X \ \in \ \frak g_\R\,,
  \end{equation}
  where $\frak g_\R$ is the Lie algebra of $G$.  This definition extends to the complexified Lie algebra $\frak g$ through the linearity rule $\pi(X_1+iX_2)=\pi(X_1)+i\pi(X_2)$.
  If $\phi=j(v)$ is a smooth automorphic function corresponding to $v\in V^\infty$, then $j(\pi(X)v)$
  is equal to $(\rho(X)\phi)(g) := \left.\f{d}{dt}\right|_{t=0}\phi(ge^{tX})$; as before, $\rho(X)$ is initially defined only for $X\in\frak g_\R$ and is then extended to $X\in \frak g$ by linearity.

    These Lie algebra derivatives occur in various {\em ad hoc} guises in the classical theory of automorphic functions, including Maass's raising and lowering operators.  Such derivatives satisfy various relations with each other, which can often  be more clearly seen by doing computations in a suitably chosen model for the representation $(\pi,V)$.  It is well-known that every representation  of $SL(2,\R)$ is a subspace of some principal series representation $(\pi_{\nu,\e},V_{\nu,\e})$, where
\begin{equation}\label{Vnudef}
\gathered
  V_{\nu,\epsilon} \ \ = \ \ \left\{ f:G\rightarrow\C\ \mid \ f\(\ttwo{a}{b}{0}{d}g\) = |\smallf{a}{d}|^{1/2+\nu} \sgn(a)^\e f(g)  \right\}, \\
  [\pi_{\nu,\e}(g)f](h) \ \ = \ \ f(hg)\,,
  \endgathered
\end{equation}
 $\nu\in \C$, and $\epsilon \in \Z/2\Z$.  We shall thus use subspaces of (\ref{Vnudef}) as convenient models for arbitrary representations.

 Recall the {\em Iwasawa decomposition} $G=NAK$, where $N=\{\ttwo 1x01|x\in\R\}$ and $A=\{\ttwo{a}{0}{0}{a\i}|a>0\}$:~each element $g\in G$ has a  decomposition
\begin{equation}\label{expliiwasawasl2a}
    g \ \ = \ \     \ttwo 1x01 \ttwo{a}{0}{0}{a\i} k_\theta\,,
\end{equation}
with $x$, $a$, and $\theta$ determined uniquely by $g$.  It follows that any function $f$ in (\ref{Vnudef}) is completely determined by its restriction to $K$; since $k_\pi=\ttwo{-1}00{-1}$, this restriction must satisfy the parity condition
\begin{equation}\label{sl2paritycondition}
    f(k_\theta) \ \ = \ \ (-1)^\e\, f(k_{\theta+\pi})\,.
\end{equation}
   Conversely, any function on $K$ satisfying (\ref{sl2paritycondition}) extends to an element of $V_{\nu,\e}$, for any $\nu\in \C$.  The $K$-isotypic subspace $V_\ell$ therefore vanishes unless $\ell\equiv \e\imod 2$.
When $V$ is equal to a full, irreducible principal series representation $V_{\nu,\e}$, $V_{\ell}$ is one-dimensional for $\ell\equiv \e\imod 2$ and consists of all complex multiples of $v_\ell$, the element of $V_{\nu,\e}$ whose restriction to $K$ is $\chi_\ell$.
In terms of the Lie algebra, membership in $V_{\ell}$ is characterized as
\begin{equation}\label{pikgenonvl}
  v\,\in\,V_{\ell} \ \ \Longleftrightarrow \ \   \pi\!\(\lietwo{0}{-1}{1}{0}\)v  \,=\, i \ell v\,,
\end{equation}
 since $\lietwo{0}{-1}{1}{0}\in \frak g$ is the infinitesimal generator of $K$, i.e., $\exp(t\lietwo{0}{-1}{1}{0})=k_{t}$.\footnote{In order to avoid confusion of whether $\pi$ or $\rho$ is acting via group translation in $G$ or by derivation in $\frak g$, we will use the traditional matrix parenthesis notation $\ttwo\cdot\cdot\cdot\cdot$ to signify elements of $G$ and the bracket notation $\lietwo\cdot\cdot\cdot\cdot$ to signify elements of $\frak g$.}
 The following well-known result computes the action of a basis of $\frak g$ on the $v_\ell$ inside $V_{\nu,\e}$:
\begin{lem}\label{lem:sl2Liealgderiv}  With the above notation,
\begin{equation}\label{sl2updownaction}
\aligned
 \pi\(\lietwo{1}{i}{i}{-1}\) v_\ell \ \ & = \ \ (2\nu+1-\ell)\,v_{\ell-2}\,, \\
  \pi\!\(\lietwo{0}{-1}{1}{0}\)v_\ell   \ \ & =\ \  i \,  \ell\, v_\ell\,,\\
    \text{and} \ \ \ \pi\(\lietwo{1}{-i}{-i}{-1}\) v_\ell \ \ & = \ \ (2\nu+1+\ell)\,v_{\ell+2}\,.
\endaligned
\end{equation}
\end{lem}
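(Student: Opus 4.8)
The plan is to compute each of the three Lie algebra derivatives directly from the definition of the principal series model $V_{\nu,\e}$ in (\ref{Vnudef}), using the Iwasawa decomposition to pin down the relevant coordinates. Since $v_\ell$ is characterized by $v_\ell(k_\theta)=e^{i\ell\theta}$ on $K$, and since every $f\in V_{\nu,\e}$ is determined by its values on $K$, it suffices to evaluate $[\pi(X)v_\ell](k_\varphi)$ for each of the three basis elements $X$ and identify the resulting function of $\varphi$ as a multiple of $e^{i(\ell\pm 2)\varphi}$ or $e^{i\ell\varphi}$. First I would set up the computation: $[\pi(X)v_\ell](k_\varphi)=\frac{d}{dt}\big|_{t=0}v_\ell(k_\varphi e^{tX})$, so for each $X$ one needs the Iwasawa decomposition $k_\varphi e^{tX}=n(t)a(t)k_{\theta(t)}$ to first order in $t$, together with the transformation law $v_\ell\big(\ttwo{a}{b}{0}{d}g\big)=|a/d|^{1/2+\nu}\sgn(a)^\e v_\ell(g)$.

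The middle identity is immediate: $\lietwo{0}{-1}{1}{0}$ exponentiates to $k_t$, so $k_\varphi e^{tX}=k_{\varphi+t}$, giving $v_\ell(k_{\varphi+t})=e^{i\ell(\varphi+t)}$ and hence derivative $i\ell\,e^{i\ell\varphi}$, i.e. $\pi(X)v_\ell=i\ell v_\ell$; this is just (\ref{pikgenonvl}) restated. For the raising/lowering elements $\lietwo{1}{\pm i}{\pm i}{-1}$, the key algebraic step is to compute $k_\varphi\,\exp\!\big(t\lietwo{1}{\pm i}{\pm i}{-1}\big)$ and extract its Iwasawa factors $a(t)$ and $\theta(t)$ to order $t$. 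A convenient way is to work in the complexified picture: $\lietwo{1}{i}{i}{-1}=\lietwo{1}{0}{0}{-1}+i\lietwo{0}{1}{1}{0}$, and the combinations $H=\lietwo{1}{0}{0}{-1}$, $E_\pm = \tfrac12\big(\lietwo{1}{0}{0}{-1}\mp i\lietwo{0}{1}{1}{0}\big)$ together with $\lietwo{0}{-1}{1}{0}$ satisfy the standard $\mathfrak{sl}_2$ bracket relations with $\lietwo{0}{-1}{1}{0}$ acting as (a multiple of) the neutral element; this guarantees a priori that $\pi(\lietwo{1}{i}{i}{-1})$ shifts $\ell\mapsto\ell-2$ and $\pi(\lietwo{1}{-i}{-i}{-1})$ shifts $\ell\mapsto\ell+2$, so only the scalar coefficient remains to be determined. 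To get the scalar, I would evaluate at $\varphi=0$, i.e. compute $v_\ell$ along the curve $\exp\!\big(t\lietwo{1}{\pm i}{\pm i}{-1}\big)$ through the identity: expand $\exp\!\big(t\lietwo{1}{i}{i}{-1}\big)=I+t\lietwo{1}{i}{i}{-1}+O(t^2)$, read off the upper-left Iwasawa entry $a(t)=1+t+O(t^2)$, the lower-right $d(t)=1-t+O(t^2)$, and the rotation angle $\theta(t)=t+O(t^2)$ (up to sign depending on $\pm$), so that $v_\ell\big(\exp(t\lietwo{1}{i}{i}{-1})\big)=(1+t)^{1/2+\nu}(1-t)^{-(1/2+\nu)}e^{i\ell t}+O(t^2)=1+t(2\nu+1-\ell)+O(t^2)$; comparing with the known shift this yields the coefficient $2\nu+1-\ell$, and the conjugate computation gives $2\nu+1+\ell$.

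The main obstacle is the bookkeeping in the Iwasawa decomposition of $k_\varphi e^{tX}$ — in particular being careful that the representative $v_\ell$ on $K$ has been extended to all of $G$ consistently with (\ref{sl2paritycondition}), and tracking the precise signs and factors of $2$ in the angle $\theta(t)$ and the $A$-component so that $2\nu+1$ (rather than $\nu+1$ or $4\nu+1$) appears. One clean way to sidestep most of this is the following alternative: realize $V_{\nu,\e}$ on functions on the line (restricting $f$ to the big Bruhat cell via $f(x)=f(\ttwo{1}{0}{x}{1})$), where the three basis elements of $\mathfrak g$ act as first-order differential operators in $x$ — namely a constant, $x\,d/dx$, and $x^2\,d/dx$ type operators with weights built from $\nu$ — then express $v_\ell$ in that model as $(1+x^2)^{-(1/2+\nu)}(\text{something})^{\ell}$ and differentiate; the recursion $\pi(X)v_\ell=(\ast)v_{\ell\pm2}$ drops out after a short computation. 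I would present whichever of these two routes produces the cleanest exposition, but in either case the conceptual content is the same: the $\mathfrak{sl}_2$ relations force the weight shift, and a single first-order Taylor expansion at the identity pins down the eigenvalue $2\nu+1\mp\ell$.
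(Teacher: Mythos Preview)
Your overall strategy is sound and is a close variant of the paper's: the paper conjugates $X$ by $k_\theta$ \emph{first} and then decomposes $k_\theta X k_{-\theta}$ in the Iwasawa Lie-algebra basis $\{\lietwo0100,\lietwo100{-1},\lietwo0{-1}10\}$, which yields $[\pi(X)v_\ell](k_\theta)$ for all $\theta$ at once (see (\ref{sl2Liealgderivpf2})--(\ref{sl2Liealgderivpf5})). You instead split the problem into (a) the bracket relation (\ref{equivlie}) to pin down the target weight, and (b) evaluation at $\theta=0$ to get the scalar. The paper actually mentions step (a) as motivation just before the proof, but its formal argument does not rely on it; your reorganization is perfectly legitimate.

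However, your explicit computation in step (b) contains a genuine slip. Since $\lietwo1ii{-1}$ is not real, $e^{tX}$ does not lie in $SL(2,\R)$ and its ``Iwasawa decomposition'' must be read either via linearity in $X$ or via formal complexification. Done correctly, $\lietwo1ii{-1}=2i\lietwo0100+\lietwo100{-1}+i\lietwo0{-1}10$, so the $\frak k$-coefficient is $i$: your rotation angle should be $\theta(t)=it+O(t^2)$, not $t$. With $\theta(t)=t$ your displayed expression $(1+t)^{1/2+\nu}(1-t)^{-(1/2+\nu)}e^{i\ell t}$ actually expands as $1+(2\nu+1+i\ell)t+O(t^2)$, not $1+(2\nu+1-\ell)t$; the correct angle $it$ gives $e^{i\ell\cdot it}=e^{-\ell t}$ and hence the desired $2\nu+1-\ell$. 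This is exactly the bookkeeping hazard you yourself flagged, and the paper's device of decomposing $k_\theta X k_{-\theta}$ at the Lie-algebra level---never exponentiating a complex $X$---is precisely what makes it disappear.
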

\noindent
Our main result
theorem~\ref{thm:main} is a generalization of \lemref{lem:sl2Liealgderiv} to $SL(3,\R)$.
Formulas (\ref{sl2updownaction}) are collectively equivalent to the three formulas
\begin{equation}\label{sl2Liealgderivs}
  \aligned
  \pi\(\lietwo{0}{1}{0}{0}\)v_\ell \ \ & = \ \ \smallf{i(\ell-1-2\nu)}{4}\,v_{\ell-2} \ - \ \smallf{i \ell}{2} \,v_{\ell} \ + \ \smallf{i(\ell+1+2\nu)}{4}\,v_{\ell+2}\,, \\
   \pi\(\lietwo{1}{0}{0}{-1}\)v_\ell \ \ &  = \ \ -\,\smallf{(\ell-1-2\nu)}{2}\,v_{\ell-2}  \ \  \ \ \ +  \ \ \ \ \
    \ \smallf{ (\ell+1+2\nu)}{2}\,v_{\ell+2}\,, \\ \text{and} \ \ \
    \pi\(\lietwo0010\)v_\ell \ \ &  = \ \ \smallf{i(\ell-1-2\nu)}{4}\,v_{\ell-2} \ + \ \smallf{i \ell}{2}\, v_{\ell} \ + \ \smallf{i(\ell+1+2\nu)}{4}\,v_{\ell+2} \\
  \endaligned
\end{equation}
for the action of $\frak g=\frak{sl}_2(\C)$ under its usual basis.  However, (\ref{sl2updownaction}) is simpler because its three Lie algebra elements   diagonalize the adjoint (conjugation) action $Ad(g):X\mapsto gXg\i$ of $K$ on $\frak g$:
\begin{equation}\label{sl2adjointdiag}
  \aligned
      Ad(k)\lietwo{1}{i}{i}{-1} \ \ & = \ \ \chi_{-2}(k)\,\lietwo{1}{i}{i}{-1} , \\
  Ad(k)\lietwo{0}{-1}{1}{0} \ \ & = \ \  \lietwo{0}{-1}{1}{0} , \\
\text{and} \ \ \   Ad(k)\lietwo{1}{-i}{-i}{-1} \ \ & = \ \ \chi_{2}(k)\,\lietwo{1}{-i}{-i}{-1} . \\
  \endaligned
\end{equation}
Indeed, the second of these formulas is obvious because $\lietwo{0}{-1}{1}{0}$ is the infinitesimal generator of the abelian group $K$, while the first and third formulas are equivalent under complex conjugation;~both can be seen either by direct  calculation, or more simply verifying the equivalent Lie bracket formulation
\begin{equation}\label{equivlie}
  \big[ \lietwo0{-1}10,\lietwo1ii{-1}   \big] \ \ = \ \  \lietwo0{-1}10\lietwo1ii{-1}  \, - \, \lietwo1ii{-1}\lietwo0{-1}10 \ \ = \ \ -2\,i\, \lietwo1ii{-1}.
\end{equation}
 Incidentally, it follows from (\ref{equivlie}) that $\pi\(\lietwo0{-1}10\)\pi\(\lietwo1ii{-1}\)v_\ell=-2i\pi\(\lietwo1ii{-1}\)v_\ell+\pi\(\lietwo1ii{-1}\)\pi\(\lietwo0{-1}10\)v_\ell$, which equals $i(\ell-2)\pi\(\lietwo1ii{-1}\)v_\ell$ by (\ref{pikgenonvl}). A second application of (\ref{pikgenonvl}) thus shows that $\pi\(\lietwo1ii{-1}\)v_\ell\in V_{\ell-2}$, and hence a multiple of $v_{\ell-2}$; the first formula in (\ref{sl2updownaction}) is more precise in that it determines the exact multiple.  Although Lemma~\ref{lem:sl2Liealgderiv} is well-known, we nevertheless include a proof in order to motivate some of our later calculations:

\begin{proof}[Proof of \lemref{lem:sl2Liealgderiv}]
If $X\in \frak g=\frak{sl}_2(\R)$, then by definition
\begin{equation}\label{lem:sl2Liealgderivpf1}
    (\pi(X)v_{\ell})(k_\theta) \ \ = \ \ \left.\f{d}{dt}\right|_{t=0} v_{\ell}(k_\theta e^{tX})
    \ \ = \ \ \left.\f{d}{dt}\right|_{t=0} v_{\ell}( e^{t k_\theta Xk_{-\theta}}k_\theta)\,.
\end{equation}
Expand the Lie algebra element $k_\theta Xk_{-\theta}$ as a linear combination
\begin{equation}\label{sl2Liealgderivpf2}
    k_\theta Xk_{-\theta}  \ \ = \ \ c_E(X,\theta)\lietwo0100 \ + \
    c_H(X,\theta)\lietwo{1}{0}{0}{-1}\ + \
    c_Y(X,\theta)\lietwo{0}{-1}{1}{0},
\end{equation}
so that
(\ref{lem:sl2Liealgderivpf1}) is equal to
\begin{multline}\label{sl2Liealgderivpf4}
    (\pi(X)v_{\ell})(k_\theta) \ \ = \ \
    c_E(X,\theta) \left.\f{d}{dt}\right|_{t=0} v_{\ell}( \ttwo1t01 k_\theta) \ + \\
    c_H(X,\theta) \left.\f{d}{dt}\right|_{t=0} v_{\ell}( \ttwo{e^t}{0}{0}{e^{-t}} k_\theta) \ + \
    c_Y(X,\theta) \left.\f{d}{dt}\right|_{t=0} v_{\ell}(k_{t}k_\theta)\,.
\end{multline}
By definition (\ref{Vnudef}), $v_\ell(\ttwo1t01 k_\theta)$ is independent of $t$ while
 $v_\ell(\ttwo{e^t}{0}{0}{e^{-t}} k_\theta)=e^{(2\nu+1)t}v_\ell( k_\theta)$, so (\ref{sl2Liealgderivpf4}) equals
 \begin{equation}\label{sl2Liealgderivpf5}
 \aligned
     (\pi(X)v_{\ell})(k_\theta) \ \ & =  \ \
      c_H(X,\theta)(2\nu+1) v_\ell( k_\theta) \ + \ c_Y(X,\theta)\smallf{d}{d\theta} v_\ell( k_\theta) \\
      & =  \ \
      c_H(X,\theta)(2\nu+1) v_\ell( k_\theta) \ + \ c_Y(X,\theta)i\ell v_\ell( k_\theta)\,. \\
  \endaligned
 \end{equation}
Formula (\ref{sl2Liealgderivpf5}) remains valid for any $X$ in the complexification $\frak g$, and can be used to calculate any of the identities in (\ref{sl2updownaction}) and (\ref{sl2Liealgderivs}).  The second formula in (\ref{sl2updownaction}) was shown in (\ref{pikgenonvl}), while the first and third are equivalent. We thus consider $X=\lietwo{1}{-i}{-i}{-1}$ and calculate  that  $c_H(X,\theta)=e^{2i\theta}$  and $c_Y(X,\theta)=-ie^{2i\theta}$, so that  formula (\ref{sl2Liealgderivpf5}) specializes to $e^{2i\theta}(2\nu+1)e^{i\ell \theta}-ie^{2i\theta}i\ell e^{i\ell\theta}= (2\nu+1+\ell)e^{i(\ell+2)\theta}$ as claimed in the third equation in (\ref{sl2updownaction}).
%
%
%
%

\end{proof}

The operators $\pi\(\lietwo{1}{-i}{-i}{-1}\)$ and $\pi\(\lietwo{1}{i}{i}{-1}\)$ in (\ref{sl2updownaction})  are the Maass raising and lowering operators, respectively; they are expressed in terms of differential operators in (\ref{sl2diffops3}).
 Note that the coefficient $2\nu+1\pm\ell$ can vanish when $\nu\in \f12\Z$.
 In such a case, the representation $V_{\nu,\e}$ is reducible, but otherwise it is not (because appropriate compositions of the raising and lowering operators map any isotypic vector to any other as one goes up and down the ladder of $K$-types).  Indeed,  the following well-known characterization of irreducible representations of $SL(2,\R)$ and their $K$-type structure  can read off from (\ref{sl2updownaction}):

\begin{thm}\label{thm:irrepsofSL2R}(Bargmann \cite{Bargmann})

1) The representation $V_{\nu,\e}$ is irreducible if and only if $\nu\notin \f{\e+1}{2}+\Z$.

2) If $k\ge 2$ is an integer congruent to $\e \imod 2$,
 then $V_{-\f{k-1}{2},\e}$ contains the  $(k-1)$-dimensional irreducible representation\footnote{There is a unique such representation up to equivalence.  It is given by the action of $SL(2,\R)$ on homogeneous polynomials of degree $k-2$ in 2 variables.} of $SL(2,\R)$, spanned by the  basis $\{v_{2-k},v_{4-k},v_{6-k},\ldots,v_{k-4},v_{k-2}\}$.

3)  If $k\ge 2$ is an integer congruent to $\e \imod 2$,   then $V_{\f{k-1}{2},\e}$ contains the direct sum of two irreducible representations of $SL(2,\R)$:~a holomorphic discrete series representation $D_k$ with basis $\{v_k,v_{k+2},v_{k+4},\ldots\}$, and its antiholomorphic conjugate $\overline{D_k}$ with basis $\{v_{-k},v_{-k-2},v_{-k-4},\ldots\}$.

4) $V_{\nu,\e}$ and $V_{-\nu,\e}$ are dual to each other, hence the quotient of $V_{-\frac{k-1}{2},\e}$ by its finite-dimensional subrepresentation in case 2) is the direct sum of $D_k$ and $\overline{D_k}$.  Likewise, the quotient $V_{\f{k-1}{2},\e}/(D_k\oplus \overline{D_k})$   is the $(k-1)$-dimensional representation.

\end{thm}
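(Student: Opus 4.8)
The plan is to read everything off the ladder structure provided by Lemma~\ref{lem:sl2Liealgderiv}. The key observation is that inside $V_{\nu,\e}$ the $K$-isotypic spaces $V_\ell$ are one-dimensional exactly for $\ell\equiv\e\imod 2$ and zero otherwise, and the raising operator $R:=\pi\(\lietwo{1}{-i}{-i}{-1}\)$ sends $v_\ell\mapsto(2\nu+1+\ell)v_{\ell+2}$ while the lowering operator $L:=\pi\(\lietwo{1}{i}{i}{-1}\)$ sends $v_\ell\mapsto(2\nu+1-\ell)v_{\ell-2}$. Since $\frak g$ is generated as a Lie algebra by these two elements together with the compact generator $W:=\lietwo{0}{-1}{1}{0}$ (which acts diagonally by $i\ell$), any nonzero $\frak g$-submodule $U\subseteq V_{\nu,\e}$ is automatically $K$-stable, hence a direct sum of some of the lines $\C v_\ell$; and $U$ is determined by which $\ell$'s it contains, subject to the closure condition that if $v_\ell\in U$ then $v_{\ell+2}\in U$ whenever $2\nu+1+\ell\ne0$ and $v_{\ell-2}\in U$ whenever $2\nu+1-\ell\ne0$. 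So the whole theorem reduces to bookkeeping about when the scalars $2\nu+1\pm\ell$ vanish, for $\ell$ in the arithmetic progression $\e+2\Z$.

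First I would prove (1). If $\nu\notin\frac{\e+1}{2}+\Z$, then $2\nu+1\pm\ell$ is never zero for $\ell\equiv\e\imod 2$ (it would force $2\nu+1\in\Z$ of the right parity, i.e. $\nu\in\frac{\e+1}{2}+\Z$); hence from any $v_\ell$ one can reach every $v_{\ell'}$ by applying suitable powers of $R$ and $L$, so any nonzero submodule is all of $V_{\nu,\e}$, giving irreducibility. Conversely, if $\nu=\frac{k-1}{2}$ or $\nu=-\frac{k-1}{2}$ with $k\ge2$, $k\equiv\e\imod 2$, I exhibit the proper submodules below, so $V_{\nu,\e}$ is reducible — this also settles the ``only if'' direction.

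Next (2): take $\nu=-\frac{k-1}{2}$, so $2\nu+1=2-k$. Then $R$ kills $v_{k-2}$ (since $2\nu+1+(k-2)=0$) and $L$ kills $v_{2-k}$ (since $2\nu+1-(2-k)=0$), while all intermediate coefficients are nonzero; hence $\{v_{2-k},v_{4-k},\dots,v_{k-2}\}$ spans a $\frak g$-stable subspace of dimension $k-1$, and it is irreducible because all the connecting scalars within this range are nonzero. It must be the unique $(k-1)$-dimensional representation of $SL(2,\R)$; I would identify it concretely via the polynomial model as stated in the footnote, or simply invoke uniqueness. For (3): take $\nu=\frac{k-1}{2}$, so $2\nu+1=k$; now $L$ kills $v_{k}$ and $R$ kills $v_{-k}$, so the two half-infinite ladders $\{v_k,v_{k+2},\dots\}$ and $\{v_{-k},v_{-k-2},\dots\}$ each span a $\frak g$-submodule, their sum is direct, and each is irreducible; I identify the upper one with $D_k$ and the lower with $\overline{D_k}$ by matching the action of $W$ (lowest/highest weight $\pm k$) and the Casimir eigenvalue with Bargmann's classification. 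Finally (4): the pairing $V_{\nu,\e}\times V_{-\nu,\e}\to\C$ given by integration over $K$, $\langle f_1,f_2\rangle=\int_K f_1(k)\overline{f_2(k)}\,dk$ (or the bilinear version pairing $v_\ell$ with $v_{-\ell}$), is $G$-invariant and nondegenerate, exhibiting $V_{-\nu,\e}\cong V_{\nu,\e}^*$; applying this duality to the submodule from (2) inside $V_{-\frac{k-1}{2},\e}$ gives that the quotient of $V_{\frac{k-1}{2},\e}$ by the $(k-1)$-dimensional piece is $D_k\oplus\overline{D_k}$, and dually the quotient $V_{-\frac{k-1}{2},\e}/(\text{finite part})$ is $D_k\oplus\overline{D_k}$ — rereading the indexing carefully, the quotient of $V_{-\frac{k-1}{2},\e}$ by its finite-dimensional submodule is the direct sum of the two ladders $\{v_{k},v_{k+2},\dots\}$ and $\{v_{-k},v_{-k-2},\dots\}$, which is $D_k\oplus\overline{D_k}$, while $V_{\frac{k-1}{2},\e}/(D_k\oplus\overline{D_k})$ is finite-dimensional of dimension $k-1$.

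The routine part is the sign/parity bookkeeping in the vanishing of $2\nu+1\pm\ell$; the one genuine point requiring care — the ``main obstacle'' — is the identification of the abstract subquotients with the named representations ($D_k$, $\overline{D_k}$, and the finite-dimensional one) and the verification that the $K$-stability of arbitrary $\frak g$-submodules really does reduce the classification to the combinatorics of the ladder, i.e. that there are no ``unexpected'' submodules; this follows because $\frak g$ acts irreducibly on each isotypic line's orbit, but it should be spelled out that a submodule, being closed under $\pi(W)$, decomposes along the eigenlines $V_\ell$.
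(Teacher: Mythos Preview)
Your proposal is correct and follows exactly the approach the paper itself indicates: the paper does not give a formal proof of this theorem but simply remarks that the result ``can be read off from (\ref{sl2updownaction}),'' noting that reducibility occurs precisely when the coefficients $2\nu+1\pm\ell$ vanish and that otherwise compositions of the raising and lowering operators connect all $K$-types. Your write-up fills in these details in the expected way; the only minor slips are that the Hermitian pairing you first write down in part~(4) exhibits $V_{\bar\nu,\e}$ rather than $V_{-\nu,\e}$ as the dual (your parenthetical bilinear version is the correct one), and that your ``only if'' argument for~(1) should also note the case $\nu=0$, $\e=1$ (i.e.\ $k=1$), which is not covered by $k\ge 2$ but is handled identically.
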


To summarize, we have described the irreducible representations of $K=SO(2,\R)$, seen how functions on the circle  of a given parity decompose in terms of them, and calculated the action of raising and lowering operators on them.
  In the next few sections we shall extend some of these results to $G=SL(3,\R)$ and $K=SO(3,\R)$, which is much more complicated because $K$ is no longer an abelian group.

\section{Irreducible representations of $SO(3,\R)$ and Wigner functions}\label{sec:wignerso3}

The group $K=SO(3,\R)$ is isomorphic to $SU(2)/\{\pm I\}$, and so its irreducible representations are precisely those of $SU(2)$ which are trivial on $-I$.  By Weyl's Unitarian Trick (which in this case is actually due to  Hurwitz \cite{hurwitz}), the irreducible representations of $SU(2)$ are restrictions of the irreducible finite-dimensional representations of $SL(2,\C)$.  Those, in turn, are   furnished by the action of $SL(2,\C)$ on the $(n+1)$-dimensional vector space of degree $n$ homogeneous polynomials in two variables (on which $SL(2,\C)$ acts by linear transformations of the variables), for $n\ge 0$.  This action is trivial on $-I$ if and only if $n$ is even, in which case it factors through $SO(3,\R)$.

Thus the irreducible representations $V_\ell$  of $SO(3,\R)$  are indexed by an integer $\ell\ge 0$ (commonly interpreted as  angular momentum) and have dimension $2\ell+1$.  A basis for $V_\ell$  can be given by the isotypic vectors for any embedded $SO(2,\R)\subset SO(3,\R)$, transforming by the characters $\chi_{-\ell},\chi_{1-\ell},\ldots,\chi_\ell$ from (\ref{charactersofSO2}).

According to the Peter-Weyl theorem, $C^\infty(SO(3,\R))$ has an orthonormal decomposition into copies of the representations $V_{\ell}$ for $\ell=0,1,2,\ldots$, each occurring with multiplicity $\dim V_{\ell}=2\ell+1$.
Wigner gave an explicit  realization of  $V_\ell$ whose matrix entries give a convenient, explicit basis for these $2\ell+1$ copies.
It is most cleanly stated in terms of Euler angles for   matrices $k\in SO(3,\R)$, which always have at least one factorization of the form
\begin{multline}\label{eulerangles}
  k(\alpha,\beta,\gamma) \ \ = \ \ \tthree{\cos \a}{-\sin \a}{0}{\sin \a}{\cos \a}{0}{0}{0}{1}
  \tthree{1}{0}{0}{0}{\cos\b}{-\sin\b}{0}{\sin\b}{\cos\b}
  \tthree{\cos \g}{-\sin \g}{0}{\sin \g}{\cos \g}{0}{0}{0}{1} , \\
\a,\g \,\in\,\R/(2\pi\Z),  \ \ 0 \le \b \le \pi\,.
\end{multline}
 Wigner functions are first diagonalized according to the respective left and right actions of the $SO(2,\R)$-subgroup parameterized by $\a$ and $\g$, and are indexed by integers $\ell$, $m_1$, and $m_2$ satisfying $-\ell\le m_1,m_2 \le \ell$; they  are given by the formula\footnote{Some authors use   slightly different notation, denoting $d^{\ell}_{m_1,m_2}(\cos\b)$ simply by $d^{\ell}_{m_1,m_2}(\beta)$.  Others such as \cite{bied}   flip the signs of $m_1$ and $m_2$.  }
\begin{multline}\label{WignerDformula}
   D^{\ell}_{m_1,m_2}(k(\a,\b,\g)) \ \ = \ \ (-1)^{\ell+m_2}\, e^{i (m_1 \a+m_2\g)}\sqrt{\smallf{(\ell+m_1)!(\ell-m_1)!}{(\ell+m_2)!(\ell-m_2)!}} \ \times \\
   \sum_{r=\max(0,m_1+m_2)}^{\min(\ell+m_1,\ell+m_2)}(-1)^{r} \(\srel{\ell+m_2}{r}\)\(\srel{\ell-m_2}{\ell+m_1-r}\)
   \cos(\smallf{\b}2)^{2r-m_1-m_2} \sin(\smallf{\b}2)^{2\ell+m_1+m_2-2r} 
\end{multline}
\cite[(3.65)]{bied},
which can easily be derived from the matrix coefficients of the $(2\ell+1)$-dimensional representation of $SL(2,\C)$ mentioned above.  They can be rewritten as
\begin{equation}\label{WignerDdef}
  D^{\ell}_{m_1,m_2}(k(\a,\b,\g)) \ \ = \ \ e^{i m_1 \a}\,d^{\ell}_{m_1,m_2}(\cos \b)\,e^{i m_2 \g}\ ,  \ \ \ -\ell\,\le\,m_1,m_2\,\le\,\ell\,,
\end{equation}
where
\begin{multline}\label{littleddef}
  d^{\ell}_{m_1,m_2}(x) \ \ = \\ (-1)^{\ell-m_1}2^{-\ell}\sqrt{\smallf{(\ell+m_2)!(1-x)^{m_1-m_2}}{(\ell-m_2)!
  (\ell+m_1)!(\ell-m_1)!(1+x)^{m_1+m_2}}}
(\smallf{d}{dx})^{\ell-m_2}(1-x)^{\ell-m_1}(1+x)^{\ell+m_1}
\end{multline}
\cite[(3.72)]{bied}.
Each $D^{\ell}_{m_1,m_2}$ is an isotypic vector for the embedded $SO(2,\R)$ corresponding to the Euler angle $\g$. The full $(2\ell+1)\times (2\ell+1)$ matrix $(D^\ell_{m_1,m_2}(k))_{-\ell\le m_1,m_2\le \ell}$ furnishes an explicit representation of $SO(3)\rightarrow GL(2\ell+1,\C)$.

The Wigner functions $D^{\ell}_{m_1,m_2}$ form an orthogonal basis for the smooth functions on $K=SO(3,\R)$ under the inner product given by integration over $K$:~more precisely,
\begin{equation}\label{innerproduct}
  \int_{K}D^{\ell}_{m_1,m_2}(k)\, \overline{D^{\ell'}_{m_1',m_2'}(k)}\,dk \ \ = \ \ \left\{
                                                                           \begin{array}{ll}
                                                                             \f{1}{2\ell+1}, & \ell=\ell',m_1=m_1',\text{ and }m_2=m_2'; \\
                                                                            0, & \hbox{otherwise.}
                                                                           \end{array}
                                                                         \right.
\end{equation}
where  $dk$ is the Haar measure which assigns $K$ volume 1 \cite[(3.137)]{bied}.

The left transformation law $D^{\ell}_{m_1,m_2}(k(\a,0,0)\,k) =e^{im_1\a}D^{\ell}_{m_1,m_2}(k)$ is unchanged under  right translation by $K$, so for any fixed $\ell$ and $m_1$ the span of
  $\{D^{\ell}_{m_1,m_2}|-\ell\le m_2 \le \ell\}$  is an irreducible representation of $K$ equivalent to $V_\ell$.  The $2\ell+1$ copies stipulated by the Peter-Weyl theorem are then furnished by the possible choices of $-\ell\le m_1\le \ell$.

In our applications, it will be important to express the product of two Wigner functions as an explicit linear combination of Wigner functions using the Clebsch-Gordan multiplication formula
\begin{multline}\label{CG1}
  D^{\ell}_{m_1,m_2}\,D^{\ell'}_{m_1',m_2'} \ \ = \\ \sum_{\ell''} \langle \ell\,m_1\,\ell'\,m_1'\,|\,\ell''\,(m_1+m_1') \rangle
\langle \ell\,m_2\,\ell'\,m_2'\,|\,\ell''\,(m_2+m_2') \rangle D^{\ell''}_{m_1+m_1',m_2+m_2'}\,,
\end{multline}
where $\langle \cdot \cdot \cdot \cdot | \cdot \cdot \rangle$ are Clebsch-Gordan coefficients (see  \cite[(3.189)]{bied}).  Although the Clebsch-Gordan coefficients are somewhat messy to define in general, we shall only require them when $\ell'=2$, in which case the terms in the sum vanish unless $|\ell-2| \le \ell'' \le \ell+2$ (this   condition is known as ``triangularity'' -- see \cite[(3.191)]{bied}). Write
\begin{equation}\label{CG2}
\aligned
  q_{k,j}(\ell,m)  \ \ := & \ \  \langle 2 \, k \, \ell \, m | (\ell+j) \, (k+m)\rangle \, ,
\endaligned
\end{equation}
which by definition vanishes unless $|k|,|j|\le 2$, $|m|\le \ell$,   $|k+m|\le \ell+j$,  and $|\ell-2| \le \ell+j$ (corresponding to when the Wigner functions in (\ref{CG1}) are defined and  the triangularity condition holds).
The  values for indices obeying these conditions are
explicitly given as
\begin{equation}\label{CG3}
  \aligned
  	q_{k,-2}(\ell,m) \ \  &  = \ \  \smallf{(-1)^k \sqrt{6(\ell-m)!(\ell+m)!}}{\sqrt{\ell(\ell-1)(2\ell-1)(2\ell+1)(2-k)!(2+k)!(\ell-k-m-2)!(\ell+k+m-2)!}} \\
	q_{k,-1}(\ell,m) \ \ &  = \ \  \smallf{(-1)^k (k+\ell k+2m)\sqrt{3(\ell-m)!(\ell+m)!}}{\sqrt{\ell(\ell-1)(\ell+1)(2\ell+1)(2-k)!(2+k)!(\ell-k-m-1)!(\ell+k+m-1)!}} \\
	q_{k,0}(\ell,m)  \ \ & = \ \  \smallf{(-1)^k (2\ell^2(k^2-1)+\ell(5k^2+6km-2)+3(k^2+3km+2m^2))
\sqrt{(\ell-m)!(\ell+m)!}}{\sqrt{\ell(\ell+1)(2\ell-1)(2\ell+3)(2-k)!(2+k)!(\ell-k-m)!(\ell+k+m)!}} \\
	q_{k,1}(\ell,m) \ \ &  = \ \  \smallf{(\ell k-2m) \sqrt{3(\ell-k-m+1)!(\ell+k+m+1)!}}{\sqrt{\ell(\ell+1)(\ell+2)(2\ell+1)(2-k)!(2+k)!(\ell-m)!(\ell+m)!}} \\
	q_{k,2}(\ell,m)  \ \ & = \ \  \smallf{\sqrt{6(\ell-k-m+2)!(\ell+k+m+2)!}}{\sqrt{(\ell+1)(\ell+2)(2\ell+1)(2\ell+3)(2-k)!(2+k)!(\ell-m)!(\ell+m)!}}
  \endaligned
\end{equation}
(see \cite[Table~27.9.4]{AS} or \cite[p.~637]{bied}).
The Clebsch-Gordan coefficients satisfy the   relation
\begin{multline}\label{CG4}
	  \sqrt{\tfrac{2}{3}} \left(\ell j+\smallf{j(j+1)}{2}-3\right) q_{0,j}(\ell,m) \ \ =\ \
	\sqrt{(\ell-m)(\ell+1+m)}\, q_{-1,j}(\ell,m+1) \\ + \ \sqrt{(\ell+m)(\ell+1-m)}\, q_{1,j}(\ell,m-1)\,,
\end{multline}
which follows from \cite[34.3.8 and 34.3.14]{NIST} or direct computation from (\ref{CG3}).  As a consequence of this and (\ref{CG1}),
\begin{equation}\label{CG5}
\aligned
	&\sqrt{(\ell-m_1)(\ell+1+m_1)} D^2_{-1,n} D^\ell_{m_1+1,m_2} +\sqrt{(\ell+m_1)(\ell+1-m_1)} D^2_{1,n} D^\ell_{m_1-1,m_2} \\
	& \ \ \ \ \ \ \ \ = \ \  \sum_{-2 \le j \le 2} \Bigl( \sqrt{(\ell-m_1)(\ell+1+m_1)} \, q_{-1,j}(\ell,m_1+1) \\
	& \ \ \ \ \  \ \ \ \ \  \  \qquad + \ \sqrt{(\ell+m_1)(\ell+1-m_1)} \,q_{1,j}(\ell,m_1-1) \Bigr) q_{i,j}(\ell,m_2) \, D^{\ell+j}_{m_1,m_2+n} \\
	&\ \ \ \ \ \ \ \ =  \ \ \sqrt{\smallf 23} \sum_{-2 \le j \le 2} \(j \ell +\smallf{j(j+1)}{2}-3\) q_{0,j}(\ell,m_1) \, q_{i,j}(\ell,m_2)\,  D^{\ell+j}_{m_1,m_2+n}
\endaligned
\end{equation}
for $n\in\{-2,-1,0,1,2\}$.

\section{Principal series for $SL(3,\R)$}\label{sec:princseries}

In complete analogy to (\ref{Vnudef}), principal series for $G=SL(3,\R)$ are defined as
\begin{multline}\label{princsl3}
  V_{\lambda,\delta} \ \ = \ \ \left\{f:G\rightarrow\C \,|\right. \\
 \left. f\(\tthree{a}{\star}{\star}{0}{b}{\star}{0}{0}{c}g\) \,=\, |a|^{1+\lambda_1}|b|^{\lambda_2}|c|^{-1+\lambda_3}\sgn(a)^{\delta_1}\sgn(b)^{\delta_2}\sgn(c)^{\delta_3}f(g)
\right\}
\end{multline}
for $\lambda=(\l_1,\l_2,\l_3)\in\C^3$ satisfying $\l_1+\l_2+\l_3=0$ and $\d=(\d_1,\d_2,\d_3)\in(\Z/2\Z)^3$; $\pi_{\l,\d}$
again acts by right translation.  The Iwasawa decomposition asserts that every element of $g$ is the product of an upper triangular matrix times an element of $K=SO(3,\R)$.  Thus all elements of $V_{\l,\d}$ are uniquely determined by their restrictions to $K$ and the transformation law in (\ref{princsl3}).  Just as was the case for $SL(2,\R)$ and $SO(2,\R)$, not all functions on $SO(3,\R)$ are restrictions of elements of $V_{\l,\d}$:~as before, the function must transform under any upper triangular matrix in $K$ according to the character defined in (\ref{princsl3}).

\begin{lem}\label{lem:so3parity}
Recall the Euler angles defined in (\ref{eulerangles}).
 A function $f:SO(3,\R)\rightarrow\C$ is the restriction of an element of $V_{\l,\d}$ if and only if
\begin{equation}\label{so3parityconditions}
\aligned
    f(k(\a,\b,\g)) \ \ & = \ \ (-1)^{\d_1+\d_2}\,f(k(\a+\pi,\b,\g)) \\ \text{and} \ \ \ \   f(k(\a,\b,\g)) \ \ & = \ \ (-1)^{\d_2+\d_3}\,f(k(\pi-\a,\pi-\b,\pi+\g))\,.
\endaligned
\end{equation}
\end{lem}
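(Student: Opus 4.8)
The plan is to recognize the two conditions in (\ref{so3parityconditions}) as the equivariance of $f$ under left translation by the finite group $B\cap K$, where $B\subset G$ denotes the subgroup of upper-triangular matrices; the Iwasawa decomposition $G=NAK$ then does the rest.

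First I would observe that an orthogonal upper-triangular matrix is automatically diagonal, so $B\cap K$ consists exactly of the matrices $\mathrm{diag}(\e_1,\e_2,\e_3)$ with each $\e_i\in\{\pm1\}$ and $\e_1\e_2\e_3=1$; this is a Klein four-group generated by $\zeta_1:=\mathrm{diag}(-1,-1,1)$ and $\zeta_2:=\mathrm{diag}(1,-1,-1)$, on which the character from (\ref{princsl3}) takes the values $\chi(\zeta_1)=(-1)^{\d_1+\d_2}$ and $\chi(\zeta_2)=(-1)^{\d_2+\d_3}$. Next I would record the matrix identities
\[
 \zeta_1\,k(\a,\b,\g) \ = \ k(\a+\pi,\b,\g), \qquad \zeta_2\,k(\a,\b,\g) \ = \ k(\pi-\a,\pi-\b,\pi+\g),
\]
in terms of the Euler-angle factorization (\ref{eulerangles}). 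The first holds because $\zeta_1$ is the rotation by $\pi$ about the third coordinate axis, hence commutes with the $\a$-rotation and simply translates it by $\pi$. The second is the only real computation: writing $z(\theta)$ and $x(\theta)$ for the two rotation matrices appearing in (\ref{eulerangles}), so that $\zeta_1=z(\pi)$ and $\zeta_2=x(\pi)$, one checks $\zeta_2\,z(\a)=z(-\a)\,\zeta_2$ and $x(-\psi)=\zeta_1\,x(\psi)\,\zeta_1$, rewrites $x(\pi+\b)=x(-(\pi-\b))$ (legitimate since $0\le\pi-\b\le\pi$), and combines these facts. Granting the identities, substituting them into the right-hand sides of (\ref{so3parityconditions}) and using that $(-1)^{\d_i+\d_j}$ is its own inverse shows that (\ref{so3parityconditions}) is equivalent to $f(\zeta_a k)=\chi(\zeta_a)f(k)$ for all $k\in K$ and $a\in\{1,2\}$; since $\zeta_1,\zeta_2$ generate $B\cap K$ and $\chi$ is multiplicative on diagonal matrices, this is in turn equivalent to $f(\zeta k)=\chi(\zeta)f(k)$ for every $\zeta\in B\cap K$.

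For the ``only if'' direction, if $f=F|_K$ with $F\in V_{\l,\d}$ then, since each $\zeta\in B\cap K$ is in particular upper-triangular, the defining law (\ref{princsl3}) gives $f(\zeta k)=F(\zeta k)=\chi(\zeta)F(k)=\chi(\zeta)f(k)$, and reading this off for $\zeta_1$ and $\zeta_2$ via the identities above yields (\ref{so3parityconditions}). For the converse, given $f$ satisfying (\ref{so3parityconditions}) I would use the unique Iwasawa decomposition $g=nak$ to define $F(g):=\chi(a)f(k)$, which restricts to $f$ on $K$; to check $F\in V_{\l,\d}$ I would take an arbitrary upper-triangular $b$ of determinant $1$ with nonzero diagonal, factor its diagonal part as $a'\e$ with $a'\in A$ and $\e\in B\cap K$, and move $\e$ to the right past $n$ and $a$ (conjugation by $\e$ preserves $N$ and fixes $A$ pointwise) to obtain the Iwasawa decomposition of $bg$; then $F(bg)=\chi(a'a)\,f(\e k)=\chi(a')\chi(\e)\chi(a)\,f(k)=\chi(b)\,F(g)$, using $f(\e k)=\chi(\e)f(k)$.

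The only real obstacle is the second matrix identity $\zeta_2\,k(\a,\b,\g)=k(\pi-\a,\pi-\b,\pi+\g)$, which mixes all three Euler angles and requires a little care to keep the new middle angle $\pi-\b$ inside $[0,\pi]$. Everything else is routine bookkeeping with the Iwasawa decomposition and with the multiplicativity of $\chi$ on the diagonal torus.
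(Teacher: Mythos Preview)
Your proposal is correct and follows essentially the same approach as the paper: the paper also introduces the two diagonal elements $m_{-1,-1,1}=\zeta_1$ and $m_{1,-1,-1}=\zeta_2$, records the same two Euler-angle identities (\ref{m11kabgs}), and then argues that (\ref{so3parityconditions}) is exactly the equivariance needed for the Iwasawa extension to be well-defined. Your write-up is in fact more explicit than the paper's about the converse direction and about verifying the second Euler-angle identity, but the underlying argument is the same.
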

\begin{proof}
Consider the matrices
\begin{equation}\label{order4group}
 m_{-1,-1,1}  \ \  := \ \    \tthree{-1}000{-1}0001  \ \ \ \text{and} \ \ \
  m_{1,-1,-1}   \ \ :=  \ \   \tthree 1000{-1}000{-1} .
\end{equation}
Direct calculation shows that
\begin{equation}\label{m11kabgs}
\aligned
    m_{-1,-1,1}\,k(\a,\b,\g) \ \ & = \ \ k(\a+\pi  ,\b,\g) \\
\text{and} \ \ \
m_{1,-1,-1}\,k(\a,\b,\g) \ \ & = \ \ k(\pi-\a ,\pi-\b,\pi+\g )
\endaligned
\end{equation}
for any $\a,\g\in\R/(2\pi \Z)$ and $0\le \b\le \pi$.
Thus (\ref{so3parityconditions}) must hold for  functions $f\in V_{\l,\d}$.  Conversely,
since $m_{-1,-1,1}$ and $m_{1,-1,-1}$ generate all four upper triangular matrices in $K$,
  an extension of $f$ from $K=SO(3,\R)$ to $G=SL(3,\R)$ given by the transformation law in (\ref{princsl3}) is well-defined if it satisfies (\ref{so3parityconditions}).
\end{proof}

Like all functions on $K=SO(3,\R)$, $D^{\ell}_{m_1,m_2}$ has a unique extension   to
$G=SL(3,\R)$ satisfying the transformation law
\begin{multline}\label{WignerDonG}
  D^{\ell}_{m_1,m_2}\( \tthree{a}{\star}{\star}{0}{b}{\star}{0}{0}{c}k(\a,\b,\g)\) \ \ = \\
  a^{1+\lambda_1}b^{\lambda_2}c^{-1+\lambda_3} \,
  e^{i m_1 \a+i m_2 \g}\,d^{\ell}_{m_1,m_2}(\cos \b)\,, \ \ a,b,c\,>\,0\,.
\end{multline}
However, this extension may not be a well-defined element of the principal series $V_{\lambda,\delta}$ (\ref{princsl3}); rather, it is an element of the line bundle (\ref{linebundle}). Instead, certain linear combinations must be taken in order to account for the parities $(\d_1,\d_2,\d_3)$:

\begin{lem}\label{sl3Kmultiplicities}
The $V_\ell$-isotypic component of the principle series $V_{\l,\d}$ is spanned by
\begin{equation}\label{basisforlisotypicprincseries}
  \{ v_{\ell,m_1,m_2} \   | \  \srel{m_1\,\equiv\, \d_1+\d_2\imod 2}{-\ell\,\le\,m_2\,\le\,\ell} \}\,, \ \ v_{\ell,m_1,m_2} \ := \   D^{\ell}_{m_1,m_2}
  +  (-1)^{\d_1+\d_3+\ell}\, D^{\ell}_{-m_1,m_2}\,,
\end{equation}
where as always the subscripts $m_1$ and $m_2$ are integers satisfying the inequality $-\ell \le m_1,m_2\le \ell$.
In particular, $V_\ell$ occurs in $V_{\l,\d}$ with multiplicity
\begin{equation}\label{dldmult}
    m_{\ell,\d} \ \ = \ \ \left\{
                               \begin{array}{ll}
                                     \lfloor \smallf{\ell+1}{2} \rfloor, &  \d_1+\d_2 \ \text{odd,} \\
 1+\lfloor \smallf{\ell}{2} \rfloor, & \d_1+\d_3+\ell\ \text{even}\,, \ \d_1+\d_2 \ \text{even,} \\

                                  \lfloor \smallf{\ell}{2} \rfloor, & \d_1+\d_3+\ell\ \text{odd}\,,\  \d_1+\d_2 \ \text{even.} \\

                               \end{array}
                             \right.
\end{equation}
\end{lem}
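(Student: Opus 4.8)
The plan is to use \lemref{lem:so3parity} to identify $V_{\l,\d}$, via restriction to $K$, with the subspace $P\subseteq C^\infty(SO(3,\R))$ cut out by the two parity conditions (\ref{so3parityconditions}). Those are conditions under left translation, hence $P$ is stable under right translation and is therefore a right $K$-submodule of $C^\infty(SO(3,\R))$; moreover the right translation action of $\pi_{\l,\d}$ on $V_{\l,\d}$ corresponds to ordinary right translation of the restricted functions. By the discussion following (\ref{WignerDdef}), the $V_\ell$-isotypic component of $C^\infty(SO(3,\R))$ is the span of $\{D^\ell_{m_1,m_2}\mid -\ell\le m_1,m_2\le \ell\}$, so the $V_\ell$-isotypic component of $V_{\l,\d}$ is exactly the subspace of that span lying in $P$. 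I would take a general element $f=\sum_{m_1,m_2}c_{m_1,m_2}D^\ell_{m_1,m_2}$ and turn each of the conditions in (\ref{so3parityconditions}) into a linear condition on the $c_{m_1,m_2}$.

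The computational ingredient is how $D^\ell_{m_1,m_2}$ transforms under the two left translations in (\ref{m11kabgs}). From (\ref{WignerDdef}) one has $D^\ell_{m_1,m_2}(k(\a+\pi,\b,\g))=(-1)^{m_1}D^\ell_{m_1,m_2}(k(\a,\b,\g))$ at once; combining this with $\cos(\pi-\b)=-\cos\b$ and the symmetry $d^\ell_{m_1,m_2}(-x)=(-1)^{\ell-m_2}d^\ell_{-m_1,m_2}(x)$ (immediate from (\ref{littleddef}), or from (\ref{WignerDformula}) by interchanging $\cos(\smallf{\b}{2})\leftrightarrow\sin(\smallf{\b}{2})$ and reindexing the sum) yields
\begin{align*}
D^{\ell}_{m_1,m_2}(k(\a+\pi,\b,\g)) \ \ &= \ \ (-1)^{m_1}\,D^{\ell}_{m_1,m_2}(k(\a,\b,\g)), \\
D^{\ell}_{m_1,m_2}(k(\pi-\a,\pi-\b,\pi+\g)) \ \ &= \ \ (-1)^{\ell+m_1}\,D^{\ell}_{-m_1,m_2}(k(\a,\b,\g)).
\end{align*}
Substituting these into (\ref{so3parityconditions}) and using linear independence of the $D^\ell_{m_1,m_2}$, the first condition forces $c_{m_1,m_2}=0$ unless $m_1\equiv\d_1+\d_2\imod 2$, and the second then forces $c_{-m_1,m_2}=(-1)^{\d_1+\d_3+\ell}\,c_{m_1,m_2}$. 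Hence $f$ is a linear combination of the $v_{\ell,m_1,m_2}$ of (\ref{basisforlisotypicprincseries}); conversely each $v_{\ell,m_1,m_2}$ visibly satisfies both conditions, which gives the first assertion.

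For the multiplicity, observe that $v_{\ell,-m_1,m_2}=(-1)^{\d_1+\d_3+\ell}v_{\ell,m_1,m_2}$, so a basis of the $V_\ell$-isotypic component of $V_{\l,\d}$ is obtained by taking, for each $m_1\in\{0,1,\dots,\ell\}$ with $m_1\equiv\d_1+\d_2\imod 2$, the family $\{v_{\ell,m_1,m_2}\mid -\ell\le m_2\le \ell\}$ (which spans a single copy of $V_\ell$), except that when $m_1=0$ these vectors vanish identically exactly when $\d_1+\d_3+\ell$ is odd and must then be dropped. Counting the admissible values of $m_1$ in the three cases --- $\d_1+\d_2$ odd; $\d_1+\d_2$ even with $\d_1+\d_3+\ell$ even; $\d_1+\d_2$ even with $\d_1+\d_3+\ell$ odd --- gives $\lfloor\smallf{\ell+1}{2}\rfloor$, $1+\lfloor\smallf{\ell}{2}\rfloor$, and $\lfloor\smallf{\ell}{2}\rfloor$ respectively, which is (\ref{dldmult}).

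I expect the only real difficulty to be bookkeeping: keeping the signs straight in the $\b\mapsto\pi-\b$ symmetry of the Wigner functions, and correctly handling the $m_1=0$ boundary case in the dimension count. Given \lemref{lem:so3parity}, there is no conceptual obstacle.
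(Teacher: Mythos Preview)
Your proof is correct and follows essentially the same route as the paper's: both invoke \lemref{lem:so3parity}, use the transformation $D^\ell_{m_1,m_2}(k(\a+\pi,\b,\g))=(-1)^{m_1}D^\ell_{m_1,m_2}$ together with the symmetry $d^\ell_{m_1,m_2}(-x)=(-1)^{\ell+m_2}d^\ell_{-m_1,m_2}(x)$ from (\ref{littleddef}), and read off the two constraints on $m_1$. Your version simply fills in more of the bookkeeping, including the explicit multiplicity count, which the paper omits.
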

 \begin{proof}
By \lemref{lem:so3parity} it suffices to determine which linear combinations of Wigner functions obey the two conditions in (\ref{so3parityconditions}).  The transformation properties of (\ref{WignerDonG})   show that the first condition is equivalent to the congruence  $m_1\equiv \d_1+\d_2\imod 2$.  The expression (\ref{littleddef}) shows that $d^{\ell}_{-m_1,m_2}(x)=(-1)^{\ell+m_2}d^{\ell}_{m_1,m_2}(-x)$, from which one readily sees the compatibility of the second condition in (\ref{so3parityconditions}) with the  sign $(-1)^{\d_1+\d_3+\ell}$ in (\ref{basisforlisotypicprincseries}).
\end{proof}

\noindent {\bf Examples:}
\begin{itemize}
\item
If $\ell=0$ the basis in (\ref{basisforlisotypicprincseries}) is nonempty if and only if $\d_1\equiv \d_2\equiv \d_3\imod 2$, which is the well-known criteria  for the existence of a spherical (i.e., $K$-fixed) vector in $V_{\l,\d}$.  The possible cases here are thus $(\d_1,\d_2,\d_3)\equiv (0,0,0)$ or $(1,1,1)\imod 2$, which are actually equivalent  because they are related by tensoring with the sign of the determinant character (since it is trivial on $SL(3,\R)$).

\item
If $\ell=1$ and $\d_1+\d_2\equiv 0\imod 2$, then $m_1$ must vanish and hence $\d_1+\d_3\equiv 1\imod 2$ in order to have a nonempty basis.  The possible cases of signs are $(\d_1,\d_2,\d_3)=(0,0,1)$ or $(1,1,0)$, which are again equivalent.
\end{itemize}

\section{The $({\frak g},K)$ module structure}\label{sec:gKmodulestructure}

It is a consequence of the Casselman embedding theorem \cite{casselman} that any irreducible representation of $G=SL(3,\R)$ is contained in some   principal series representation $V_{\l,\d}$ (\ref{princsl3}).  The Harish-Chandra module of $V_{\l,\d}$, its vector subspace of $K$-finite vectors, was seen in \lemref{sl3Kmultiplicities} to be isomorphic to $\oplus_{\ell\ge 0}V_{\ell}^{m_{\ell,\d}}$, where each copy of $V_{\l,\d}$ is explicitly indexed by certain  integers $\ell, m_1 \ge 0$ described in (\ref{basisforlisotypicprincseries}), and the multiplicity $m_{\ell,\delta}$ is given in (\ref{dldmult}).  An arbitrary subrepresentation $V$ of $V_{\l,\d}$  has a Harish-Chandra module isomorphic to $\oplus_{\l\ge 0}V_\ell^{m(V,\ell)}$, where $0\le m(V,\ell)\le m_{\ell,\d}$.

In \secref{sec:SL2R} we studied the Harish-Chandra modules of representations of $SL(2,\R)$ and their Lie algebra actions; in this section we consider these for $G=SL(3,\R)$.  Let us first denote some elements of the complexified Lie algebra ${\frak g}={\frak{sl}}_3(\C)$ of $G$ as follows:
\begin{equation}\label{sl3liealg}
\gathered
X_{1} \ \ = \ \ \liethree 010000000 \! , \ \ X_{2} \ \ = \ \ \liethree 000001000 \!   , \ \ X_{3} \ \ = \ \ \liethree 001000000\!, \\
X_{-1} \ \ = \ \ \liethree 000100000 \!  , \ \ \ X_{-2} \ \ = \ \ \liethree 000000010 \!  , \ \ X_{-3} \ \ = \ \ \liethree 000000100\!, \\
H_1 \ \ = \ \ \liethree 1000{-1}0000  \! , \ \ H_2 \ \ = \ \ \liethree 00001000{-1}\!,\\
Y_1 \ \ = \ \ -X_1 + X_{-1} \ \ = \ \ \liethree 0{-1}0100000 \!   , \ \
Y_2 \ \ = \ \ -X_2 + X_{-2} \ \ = \ \ \liethree 00000{-1}010\! , \\
Y_3 \ \ = \ \ -X_3 + X_{-3} \ \ = \ \ \liethree 00{-1}000100 \!, \\
Z_{-2} \ \ = \ \ \liethree 1i0i{-1}0000\! , \ \
Z_{-1} \ \ = \ \ \liethree 00i00{-1}i{-1}0\! , \ \
Z_{0} \ \ = \ \ \sqrt{\smallf 23}\liethree 10001000{-2}\! , \\
Z_{1} \ \ = \ \ \liethree 00i001i10\! , \ \
\text{and} \ \
Z_{2} \ \ = \ \ \liethree 1{-i}0{-i}{-1}0000\!.
\endgathered
\end{equation}
The normalization factor of $\sqrt{\f 23}$ for $Z_0$ is included to simplify later formulas.
The elements $\{X_1,X_2,X_3,X_{-1},X_{-2},X_{-3},H_1,H_2\}$ form  a basis of $\frak{sl}_3(\C)$.
  The elements $\{Y_1,Y_2,Y_3\}$ form a basis of $\frak k=\frak{so}_3(\C)$, which extends to the basis
\begin{equation}\label{convenientLiebasis}
\{Y_1,Y_2,Y_3,Z_{-2},Z_{-1},Z_0,Z_1,Z_2\}
\end{equation}
of $\frak{sl}_3(\C)$, in which the last 5 elements form a basis of the orthogonal complement $\frak p$ of $\frak k$ under the Killing form.  The elements $Z_j$ have been chosen so that
\begin{equation}\label{Zjproperty}
    \tthree{\cos \theta}{-\sin\theta}{0}{\sin\theta}{\cos\theta}{0}001 Z_j \tthree{\cos \theta}{-\sin\theta}{0}{\sin\theta}{\cos\theta}{0}001\i \ \ = \ \ e^{ij\theta}\,Z_j\,;
\end{equation}
that is, they diagonalize the adjoint action of the common $SO(2,\R)$-subgroup corresponding to the Euler angles $\alpha$ and $\gamma$.

 The rest of this section concerns  explicit formulas for the action of the basis (\ref{convenientLiebasis}) as differential operators under  right translation by $\pi$  as in  (\ref{liederiv}).  The formulas for differentiation by the first three elements, $Y_1,Y_2,Y_3$, are classical and are summarized as follows along with their action on Wigner functions  \cite{bied,NIST}.
In terms of the Euler angles (\ref{eulerangles}) on $K=SO(3,\R)$,
\begin{equation}\label{so3differentiation}
  \aligned
  \pi(Y_1) \ \ & = \ \ \f{\partial}{\partial \g} \,,\\
  \pi(Y_2) \ \ & = \ \ \f{\sin \g}{\sin \b}\f{\partial}{\partial \a} \ + \ \cos(\g)\,\f{\partial}{\partial \b} \ - \ \f{\sin \g}{\tan \b}\f{\partial}{\partial \g}\,,\\
  {\text{and}} \ \
  \pi(Y_3) \ \ & = \ \ -\,\f{\cos \g}{\sin\b}\f{\partial}{\partial \a} \ + \ \sin(\g) \,\f{\partial}{\partial \b} \ + \ \f{\cos\g}{\tan \b} \,\f{\partial}{\partial \g}\,.
  \endaligned
\end{equation}
The action of the differential operators (\ref{so3differentiation}) on the basis of Wigner functions $D^{\ell}_{m_1,m_2}$ is given by
\begin{equation}\label{so3diffonWignerD}
\aligned
  \pi(Y_1)D^{\ell}_{m_1,m_2} \ \ & = \ \  i m_2 D^{\ell}_{m_1,m_2}
  \\
  \pi(Y_2+iY_3)D^{\ell}_{m_1,m_2} \ \ & = \ \   \sqrt{\ell(\ell+1)-m_2(m_2+1)}\,D^{\ell}_{m_1,m_2+1}
   \\
    \text{and} \ \ \  \pi(-Y_2+iY_3)D^{\ell}_{m_1,m_2} \ \ & = \ \   \sqrt{\ell(\ell+1)-m_2(m_2-1)}\,D^{\ell}_{m_1,m_2-1}\,,
    \endaligned
\end{equation}
very much in analogy to the raising and lowering actions in (\ref{sl2updownaction}).
Here we recall that $\pi(Y_j)D^{\ell}_{m_1,m_2}(k):=\left. \f{d}{dt}\right|_{t=0} D^{\ell}_{m_1,m_2}(ke^{tY_j})$.  In terms of differentiation by left translation $L(Y_j)D^{\ell}_{m_1,m_2}(k):=\left. \f{d}{dt}\right|_{t=0} D^{\ell}_{m_1,m_2}(e^{tY_j}k)$,
\begin{equation}\label{leftderiv}
  \aligned
  	L(Y_1) D^\ell_{m_1,m_2}  \ \ = \ \ & i m_1 D^\ell_{m_1,m_2} \\
	L(-Y_2+i Y_3) D^\ell_{m_1,m_2}  \ \ = \ \ & \sqrt{\ell(\ell+1)-m_1(m_1+1)} D^\ell_{m_1+1,m_2} \\
	\text{and} \ \ \ L(Y_2+i Y_3) D^\ell_{m_1,m_2} \ \  = \ \ & \sqrt{\ell(\ell+1)-m_1(m_1-1)} D^\ell_{m_1-1,m_2}\,.
\endaligned
\end{equation}
This completely describes the Lie algebra action of $\frak k={\frak{so}_3}(\C)$ on the basis (\ref{basisforlisotypicprincseries}) of the Harish-Chandra module for $V_{\lambda,\delta}$.

We now turn to the key calculation of  the full Lie algebra action.  These formulas will be insensitive to the value of the parity parameter $\delta$ in the definition of the principal series (\ref{princsl3}).  For that reason, we will perform our calculations in the setting of the line bundle
\begin{equation}\label{linebundle}
  {\mathcal L}_{\l} \ \ = \ \ \left\{f:G\rightarrow\C \,|\,
  f\(\tthree{a}{\star}{\star}{0}{b}{\star}{0}{0}{c}g\) \,=\, a^{1+\lambda_1}b^{\lambda_2}c^{-1+\lambda_3}f(g), \, a,b,c>0
  \right\},
\end{equation}
which contains   $V_{\lambda,\delta}$ for any possible choice of $\delta$.  Elements of ${\mathcal L}_{\l}$ can be identified with their restrictions to $SO(3)$, and so for the rest of this section we shall tacitly identify each Wigner function $D^{\ell}_{m_1,m_2}$ with its extension to $G$ in   ${\mathcal L}_{\l}$ given in (\ref{WignerDonG}).  The right translation action $\pi$ on $V_{\lambda,\delta}$ also extends to ${\mathcal L}_{\l}$, which enables us to study the Lie algebra differentiation directly on $D^{\ell}_{m_1,m_2}$; the action on the basis elements (\ref{basisforlisotypicprincseries})  of $V_{\lambda,\delta}$ will follow immediately from this.  Though the passage to the line bundle $\mathcal L_\l$ is not completely necessary, it results in simpler formulas.

Given $X\in \frak g$ and $k\in K$, write
\begin{equation}\label{kXki}
  kXk^{-1} \ \ = \ \ X_{\frak{n}}(k) \ + \ X_{\frak{a}}(k) \ + \ X_{\frak{k}}(k)\,,
\end{equation}
 with $X_{\frak{n}}\in \frak{n}=\C X_1\oplus \C X_2\oplus \C X_3$,
$X_{\frak{a}}\in \frak{a}=\C H_1\oplus \C H_2$, and $X_{\frak{k}}\in \frak{k}=\C Y_1\oplus \C Y_2\oplus \C Y_3$.
Since $f(ke^{tX})=f(e^{tX_{\frak{n}}(k)+tX_{\frak{a}}(k)+tX_{\frak{k}}(k)}k)$, the derivative of this expression at $t=0$ is equal to
\begin{equation}\label{liealgdiff5}
  [\pi(X)f](k) \ = \   \left. \f{d}{dt}\right|_{t=0}f(e^{tX_{\frak{n}}(k)}k) \ + \  \left. \f{d}{dt}\right|_{t=0}f(e^{tX_{\frak{a}}(k)}k)
  \ + \  \left. \f{d}{dt}\right|_{t=0}f(e^{tX_{\frak{k}}(k)}k)\,.
\end{equation}
Write $X_{\frak{k}}=b_1(k)Y_1+b_2(k)Y_2+b_3(k)Y_3$ and  $X_{\frak{a}}=c_1(k)H_1+c_2(k)H_2$.
Since $f\in V_{\lambda,\d}$ satisfies the transformation law (\ref{linebundle}),
\begin{equation}\label{liealgdiff3a}
    \left. \f{d}{dt}\right|_{t=0}f(e^{tX_{\frak n}}g)f\ \ \equiv \ \ 0 \,,
\end{equation}
while
\begin{multline}\label{liealgdiff3b}
   \left. \f{d}{dt}\right|_{t=0}f(e^{tH_1}g) \ \ = \ \ (\l_1-\l_2+1)f(g)  \\
  \text{and} \ \
    \left. \f{d}{dt}\right|_{t=0}f(e^{tH_2}g) \ \ = \ \ (\l_2-\l_3+1)f(g)\,.
\end{multline}
Combining this with (\ref{leftderiv}),
we conclude
\begin{multline}\label{liealgdiff4}
  [\pi(X)D^{\ell}_{m_1,m_2}](k) \\ = \ \
 \( c_1(k)(\l_1-\l_2+1)+c_2(k)(\l_2-\l_3+1)+i m_1 b_1(k) \)D^{\ell}_{m_1,m_2}(k)
  \\
  - \ (b_2(k)+ib_3(k))\smallf{\sqrt{\ell(\ell+1)-m_1(m_1+1)}}{2}
  D^{\ell}_{m_1+1,m_2}(k)
  \\
  +\ (b_2(k)-ib_3(k))\smallf{\sqrt{\ell(\ell+1)-m_1(m_1-1)}}{2}
  D^{\ell}_{m_1-1,m_2}(k)\,,
\end{multline}
for any $\ell\ge 0$ and $-\ell\le m_1,m_2\le \ell$.

 Like all functions on $K$, each of the functions $c_1(k)$, $c_2(k)$, $b_1(k)$, $b_2(k)$, and $b_3(k)$ can be expanded as linear combinations of Wigner functions.  Applying the Clebsch-Gordan multiplication rule for products of two Wigner functions then exhibits (\ref{liealgdiff4}) as an explicit linear combination of Wigner functions.  We shall now compute these for the basis elements $X=Z_n$, $n\in \{-2,-1,0,1,2\}$, which of course entails no loss of generality:
 \begin{equation}\label{JBpf2}
\aligned
	c_1(Z_n) \ \ =& \ \    D^2_{-2,n}+\textstyle{\sqrt{\f 23}} \,D^2_{0,n} +   D^2_{2,n} \\
	c_2(Z_n)  \ \ =& \ \  2\,\textstyle{\sqrt{\f 23}}\, D^2_{0,n} \\
	b_1(Z_n) \ \ =& \ \ i   D^2_{-2,n}\,-\,i   D^2_{2,n} \\
	b_2(Z_n) \ \ =& \ \ -\!D^2_{-1,n}\,+\,D^2_{1,n} \\
	b_3(Z_n) \ \ =& \ \ i D^2_{-1,n}\,+\,i D^2_{1,n}\,,
\endaligned
\end{equation}
as can be checked via direct computation.
We now state the action of the $Z_n$ on Wigner functions:
\begin{thm}
\label{thm:main}
Let
\begin{equation}\label{Lambda}
    \aligned
\L^{(-2)}_j(\l,\ell,m_1) \ \ & = \ \ \l_1\,-\,\l_2\,+\,1\,-\,m_1 \\
\L^{(0)}_j(\l,\ell,m_1) \ \ & = \ \ \l_1\,+\,\l_2\,-\,2\l_3+j\ell+\f{j+j^2}{2} \\
\L^{(2)}_j(\l,\ell,m_1) \ \ & = \ \ \l_1\,-\,\l_2\,+\,1\,+\,m_1\,, \\
\endaligned
\end{equation}
 $c_{-2}=c_2=1$, $c_0=\sqrt{\f 23}$, and recall the formulas for $q_{k,j}(\ell,m)=\langle 2k\ell m|(\ell+j)(k+m)\rangle$ given in (\ref{CG3}). For $n\in \{-2,-1,0,1,2\}$,
\begin{equation}\label{liealgdiffZj}
    \pi(Z_n)D^{\ell}_{m_1,m_2}  \ \ =   \sum_{\srel{-2\le j \le 2}{k\,\in\,\{-2,0,2\}}} c_k \,  q_{k,j}(\ell,m_1)\, q_{n,j}(\ell,m_2) \, \Lambda^{(k)}_j(\lambda,\ell,m_1) \, D^{\ell+j}_{m_1+k,m_2+n}
\end{equation}
as an identity of elements in the line bundle $\mathcal L_\l$  from  (\ref{linebundle}).
\end{thm}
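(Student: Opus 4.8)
The plan is to start from the general differentiation formula~(\ref{liealgdiff4}), specialize it to $X=Z_n$ using the explicit coefficient functions recorded in~(\ref{JBpf2}), and then reorganize the result into the shape of~(\ref{liealgdiffZj}) with the help of the Clebsch--Gordan multiplication rule~(\ref{CG1}) and the identity~(\ref{CG5}). As a preliminary remark, both sides of~(\ref{liealgdiffZj}) lie in $\mathcal L_\l$: the left side because the right-translation Lie differentiation $\pi(X)$ preserves the left transformation law defining~(\ref{linebundle}) (it is a left condition, and $\pi$ acts on the right), and the right side because it is a finite combination of Wigner functions regarded, as throughout this section, as elements of $\mathcal L_\l$ via~(\ref{WignerDonG}). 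Since elements of $\mathcal L_\l$ are determined by their restriction to $K$, it suffices to verify~(\ref{liealgdiffZj}) on $K$, which is exactly what~(\ref{liealgdiff4}) computes.

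Substituting~(\ref{JBpf2}) into~(\ref{liealgdiff4}) with $X=Z_n$, I would first simplify the coefficient of $D^{\ell}_{m_1,m_2}$: using $(im_1)(i)=-m_1$ and $(\l_1-\l_2+1)+2(\l_2-\l_3+1)=\l_1+\l_2-2\l_3+3$, it becomes $(\l_1-\l_2+1-m_1)D^2_{-2,n}+\sqrt{\tfrac23}\,(\l_1+\l_2-2\l_3+3)\,D^2_{0,n}+(\l_1-\l_2+1+m_1)D^2_{2,n}$, that is $c_{-2}\L^{(-2)}_j D^2_{-2,n}+c_0(\l_1+\l_2-2\l_3+3)D^2_{0,n}+c_2\L^{(2)}_j D^2_{2,n}$ in the notation of~(\ref{Lambda}). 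For the remaining two terms of~(\ref{liealgdiff4}) I would compute $b_2(Z_n)+ib_3(Z_n)=-2D^2_{-1,n}$ and $b_2(Z_n)-ib_3(Z_n)=2D^2_{1,n}$ and rewrite $\ell(\ell+1)-m_1(m_1\pm1)=(\ell\mp m_1)(\ell\pm m_1+1)$; these two terms then coincide exactly with the left-hand side of~(\ref{CG5}), so I may replace them by its right-hand side $c_0\sum_{j}\big(j\ell+\tfrac{j(j+1)}{2}-3\big)\,q_{0,j}(\ell,m_1)\,q_{n,j}(\ell,m_2)\,D^{\ell+j}_{m_1,m_2+n}$.

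It then remains to expand the three surviving products $D^2_{k,n}D^{\ell}_{m_1,m_2}$, $k\in\{-2,0,2\}$, via~(\ref{CG1}) --- each equal to $\sum_{j} q_{k,j}(\ell,m_1)\,q_{n,j}(\ell,m_2)\,D^{\ell+j}_{m_1+k,m_2+n}$ --- and to collect terms. The products with $k=\pm2$ contribute only to the targets $D^{\ell+j}_{m_1\pm2,m_2+n}$ and directly yield the $c_{\pm2}\L^{(\pm2)}_j$ terms of~(\ref{liealgdiffZj}). The target $D^{\ell+j}_{m_1,m_2+n}$ instead receives contributions both from the diagonal $D^2_{0,n}$ piece, with coefficient $c_0(\l_1+\l_2-2\l_3+3)\,q_{0,j}\,q_{n,j}$, and from~(\ref{CG5}), with coefficient $c_0\big(j\ell+\tfrac{j+j^2}{2}-3\big)\,q_{0,j}\,q_{n,j}$; these sum to $c_0\,\L^{(0)}_j\,q_{0,j}\,q_{n,j}$ in view of~(\ref{Lambda}). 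Assembling the three families gives~(\ref{liealgdiffZj}).

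The one step that genuinely requires care is this final collection: the square-root terms in~(\ref{liealgdiff4}) naively move $m_1$ by $\pm1$, but after multiplication by $D^2_{\mp1,n}$ they return to the $m_1$-indexed family $D^{\ell+j}_{m_1,m_2+n}$, and it is precisely the Clebsch--Gordan identity~(\ref{CG5}) --- ultimately the recursion~(\ref{CG4}) --- that collapses this otherwise unwieldy combination of square roots and Clebsch--Gordan coefficients into the single coefficient $c_0\big(j\ell+\tfrac{j(j+1)}{2}-3\big)\,q_{0,j}$, whose ``$-3$'' exactly cancels the stray ``$+3$'' coming from the diagonal $D^2_{0,n}$ term. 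Everything else is elementary bookkeeping, and the parities $\d$ play no role, consistent with the statement being an identity in the line bundle $\mathcal L_\l$.
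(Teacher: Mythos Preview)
Your proposal is correct and follows essentially the same approach as the paper: combine (\ref{liealgdiff4}) with the explicit coefficients (\ref{JBpf2}) to obtain the intermediate expression (\ref{JB3}), then apply the Clebsch--Gordan expansion (\ref{CG1}) to the $k=\pm2,0$ products and the identity (\ref{CG5}) to collapse the two square-root terms. The paper's proof is simply terser --- it records (\ref{JB3}) and says ``the Theorem now follows from (\ref{CG1}) and (\ref{CG5})'' --- whereas you have (correctly) written out the cancellation of the ``$+3$'' against the ``$-3$'' and the remaining bookkeeping.
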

\begin{proof}
Formulas (\ref{liealgdiff4}) and (\ref{JBpf2}) combine to show
\begin{equation}\label{JB3}
  \aligned
  	\pi(Z_n) D^\ell_{m_1,m_2}  \ \ =& \ \ c_{-2} (\lambda_1-\lambda_2+1-m_1) D^2_{-2,n} D^\ell_{m_1,m_2} \\
	&+c_{0} (\lambda_1+\lambda_2-2\lambda_3+3) D^2_{0,n} D^\ell_{m_1,m_2} \nonumber \\
	&+c_{2} (\lambda_1-\lambda_2+1+m_1) D^2_{2,n} D^\ell_{m_1,m_2} \nonumber \\
	&+\sqrt{\ell(\ell+1)-m_1(m_1+1)} D^2_{-1,n} D^\ell_{m_1+1,m_2} \nonumber \\
	&+\sqrt{\ell(\ell+1)-m_1(m_1-1)} D^2_{1,n} D^\ell_{m_1-1,m_2}. \nonumber
  \endaligned
\end{equation}
The Theorem now follows from (\ref{CG1}) and (\ref{CG5}).

\end{proof}

Theorem~\ref{thm:intro} follows immediately from Theorem~\ref{thm:main}, the definition of $v_{\ell,m_1,m_2}$ given in (\ref{basisforlisotypicprincseries}), and the identity $q_{k,j}(\ell,m)=(-1)^j q_{-k,j}(\ell,-m)$.  Formula (\ref{liealgdiffZj}) 
expresses Lie algebra derivatives of $D^{\ell}_{m_1,m_2}$ as linear combinations of  $V_{\ell+j}$-isotypic vectors for $-2\le j\le 2$.  We shall now explain how the operators $U_j$ defined by
\begin{align}\label{Ujdef}
	U_j D^\ell_{m_1,m_2} \ \  = \ \ & \sum_{k\,\in\,\{-2,0,2\}} c_k \, q_{k,j}(\ell,m_1) \, \Lambda^{(k)}_j(\lambda,\ell,m_1)\, D^{\ell+j}_{m_1+k,m_2}
\end{align}
for $-2\le j \le 2$ can usually be written using Lie algebra differentiation under $\pi$.  These map $V_\ell$-isotypic vectors to $V_{\ell+j}$-isotypic vectors, and thus
 separate out the contributions to (\ref{liealgdiffZj}) for fixed $j$ (aside from an essentially harmless shift of $m_2$).  Since they do not require linear combinations, the operators $U_j$ are in a sense more analogous to Maass's  raising and lowering operators (\ref{sl2updownaction}), and often more useful than the $\pi(Z_n)$.

To write $U_j$ in terms of Lie algebra derivatives, let
\begin{align}
	W^\ell_{\pm 2,m_2}  \ \ & = \ \   \pi(\smallf{\mp Y_2+i Y_3}{\sqrt{\ell(\ell+1)-m_2(m_2\pm1)}}) \circ
\pi( \smallf{\mp Y_2+i Y_3}{\sqrt{\ell(\ell+1)-(m_2\pm1)(m_2\pm2)}}) \circ \pi( Z_{\pm 2})\,, \\
	 W^\ell_{\pm 1,m_2}  \ \  & =  \ \  \pi\(\frac{\mp Y_2+i Y_3}{\sqrt{\ell(\ell+1)-m_2(m_2\pm1)}} \) \circ \pi(Z_{\pm 1}) \,, \ \ \text{and}\\
 W^\ell_{0,m_2}   \ \  & =    \ \   \pi(Z_0)\,,
\end{align}
which are defined whenever the arguments of the square-roots are positive.
Let $\Delta_K$ be the $SO(3)$ laplacian, which acts on Wigner functions by $\D_K D^\ell_{m_1,m_2}=\ell(\ell+1)D^\ell_{m_1,m_2}$ \cite[Section 3.8]{bied}.  Then the (commutative) composition of operators
\begin{align}
	P^\ell_j \ \  =  \ \ \prod_{\substack{|k| \le 2\\ k\ne j \\ \ell+k \ge 0}} \frac{\Delta_K- (\ell+k)(\ell+k+1)}{(\ell+j)(\ell+j+1)-(\ell+k)(\ell+k+1)}
\end{align}
 acts on Wigner functions by the formula
\begin{equation}\label{Plproject}
  P^\ell_jD^{\ell'}_{m_1,m_2} \ \ = \ \ \left\{
                                          \begin{array}{ll}
                                            0, & \ell'\neq \ell+j \\
                                            D^{\ell+j}_{m_1,m_2}, &\ell'=\ell+j
                                          \end{array}
                                        \right.
\end{equation}
for $\ell-2\le \ell' \le \ell+2$.  After composing with this projection operator,
it follows from (\ref{so3diffonWignerD}) and (\ref{liealgdiffZj}) that
\begin{equation}\label{PWqU}
  P^\ell_{j}\circ W^{\ell+j}_{n,m_2}  \ \ = \ \  q_{n,j}(\ell,m_2)\, U_j
\end{equation}
on the span of Wigner functions $D^{\ell}_{m_1,m_2}$ with $|m_1|\le\ell$.
Furthermore,  for any choice of $\ell,m_2,j$ for which $(\ell,j)\not\in \{(0,0),(0,1),(1,-1)\}$,
  there is some $-2\le n \le 2$ with  $q_{n,j}(\ell,m_2) \ne 0$, so that $U_j$ coincides with  the Lie algebra differentiation $q_{n,j}(\ell,m_2)^{-1} P^\ell_{j}\circ W^{\ell+j}_{n,m_2}$ on this span.

\section{Examples of composition series}\label{sec:examples}

In this section we present a selected assortment of  examples of representations of $SL(3,\R)$ that are related to automorphic forms, and explicitly describe their $K$-type structure in terms of Wigner functions.  The treatment here is by no means exhaustive.  However, the techniques we use are directly applicable to any representation of $SL(3,\R)$.
We also briefly explain how to compute the composition series in some examples, though for convenience we incorporate information from the {\sc atlas} software package \cite{atlas} (which computes the length of the composition series as well as the multiplicities of the $K$-types of each factor).  A similar analysis was performed in  \cite{howe} using a different model for principal series.

It is also possible to use the theory of intertwining operators to obtain descriptions of invariant subspaces in terms of Wigner functions.  Indeed, the simple intertwining operator corresponding to the Euler angle $\alpha$ in (\ref{eulerangles}) is diagonalized by Wigner functions, via what amounts to an $SL(2)$-calculation.  The simple intertwining operator corresponding to the Euler angle $\beta$ does not act diagonally on this basis, but does act diagonally on a basis of Wigner-like functions defined instead  using different Euler angles related by conjugation by a Weyl element.  Using a permutation matrix in $K=SO(3,\R)$ along with the fact that the Wigner $D$-matrix $(D^\ell_{m_1,m_2}(k))_{-\ell\le m_1,m_2\le \ell}$  is a representation of $K$, it is trivial to explicitly diagonalize this intertwining operator as well (but not both intertwining operators simultaneously).

\subsection{ Representations induced from trivial on $SL(2,\R)$}\label{sec:sub:inducedtriv}

We shall now describe how  \thmref{thm:main} recovers simpler, known formulas for some degenerate principal series representations (see
\cite{howelee}).
Let $\pi=\pi_s$ denote the subspace of $V_{(s-1/2,s+1/2,-2s),(0,0,0)}$ spanned by $D^{\ell}_{0,m_2}$, $|m_2|\le \ell$, with $\ell$ even (a parity condition forced by  (\ref{basisforlisotypicprincseries})).
Thus the underlying Harish-Chandra module of $\pi$ is
\begin{equation}\label{inducedfromtriv1}
  HC(\pi) \ \ = \ \ \bigoplus_{\srel{\ell\, \ge \,0}{\ell \,\in\,2\Z}} V_{\ell}
\end{equation}
and each $V_{\ell}$ has basis $\{D^{\ell}_{0,m_2}|-\ell \le m_2 \le \ell\}$.

It is not hard to see that
\begin{equation}\label{indfromtrivial1}
  \pi \ \ = \ \ \text{Ind}_{P_{2,1}}^G\psi_s
\end{equation}
 is  induced from a quasicharacter $\psi_s$ of a maximal parabolic subgroup $P_{2,1}$ of $G=SL(3,\R)$, from which it follows that it is a $SL(3,\R)$-invariant subspace of $V_{(s-1/2,s+1/2,-2s),(0,0,0)}$ (equivalently, that (\ref{inducedfromtriv1}) is ${\frak{sl}}(3,\R)$-invariant).  The formulas in \secref{sec:gKmodulestructure} give much finer information.
With $(\l_1,\l_2,\l_3)=(s-1/2,s+1/2,-2s)$ and $m_1=0$, (\ref{Lambda}) reads
\begin{equation}\label{Lambdadegen}
    \aligned
\L^{(-2)}_j(\l,\ell,m_1) \ \ &  = \ \ 0 \\
\L^{(0)}_j(\l,\ell,m_1) \ \ &   = \ \ 4s +j\ell+\smallf{j+j^2}{2}  \\
\L^{(2)}_j(\l,\ell,m_1) \ \ &  = \ \ 0 \,.\\
\endaligned
\end{equation}
Note also that $D^{\ell}_{0,m_2}$'s extension (\ref{WignerDonG}) is a well-defined element of $V_{\l,\d}$ for $\ell$ even.
Thus, in this situation formula (\ref{liealgdiffZj}) from \thmref{thm:main} states
\begin{equation}\label{liealgdiffZjdegen12}
\aligned
    \pi(Z_n)D^{\ell}_{0,m_2}  \ \ & = \ \  \sqrt{\smallf 23}\,\sum_{j\,\in\,\{-2,0,2\}} q_{0,j}(\ell,0)\,q_{n,j}(\ell,m_2)\,(4s+j\ell+\smallf{j+j^2}{2})\,
D^{\ell+j}_{0,m_2+n}\,,\\
\endaligned
\end{equation}
since $q_{0,-1}(\ell,0)=q_{0,1}(\ell,0)=0$ by (\ref{CG3}).  Furthermore, (\ref{Ujdef}) specializes to the formula
\begin{equation}\label{liealgeUjdegen}
  U_j D^{\ell}_{0,m_2} \ \ = \ \  \sqrt{\smallf 23}\, q_{0,j}(\ell,0)\,(4s+j\ell+\smallf{j+j^2}{2})\,D^{\ell+j}_{0,m_2} \,,
\end{equation}
which vanishes if $j=\pm 1$.  Here $U_{\pm 2}$ play the role of the raising and lowering operators in (\ref{sl2updownaction}), from which the reducibility of $\pi$  can be easily deduced as it was for the principal series in \thmref{thm:irrepsofSL2R}.

\begin{figure}\label{fig:1}
\caption{An illustration of (\ref{liealgeUjdegen}) for  representations of $SL(3,\R)$ induced from the trivial representation of $SL(2,\R)$.  Each copy of $V_\ell$ is spanned by  $\{D^{\ell}_{0,m_2}|-\ell\le m_2\le \ell\}$.  At certain $s\in \frac{1}{4}\Z$ the operators $U_{\pm 2}$ may be trivial, in which case the representation reduces (analogously to the reducibility of the $SL(2,\R)$ principal series in Theorem~\ref{thm:irrepsofSL2R}).}
\begin{center}
\includegraphics[width=5.5in]{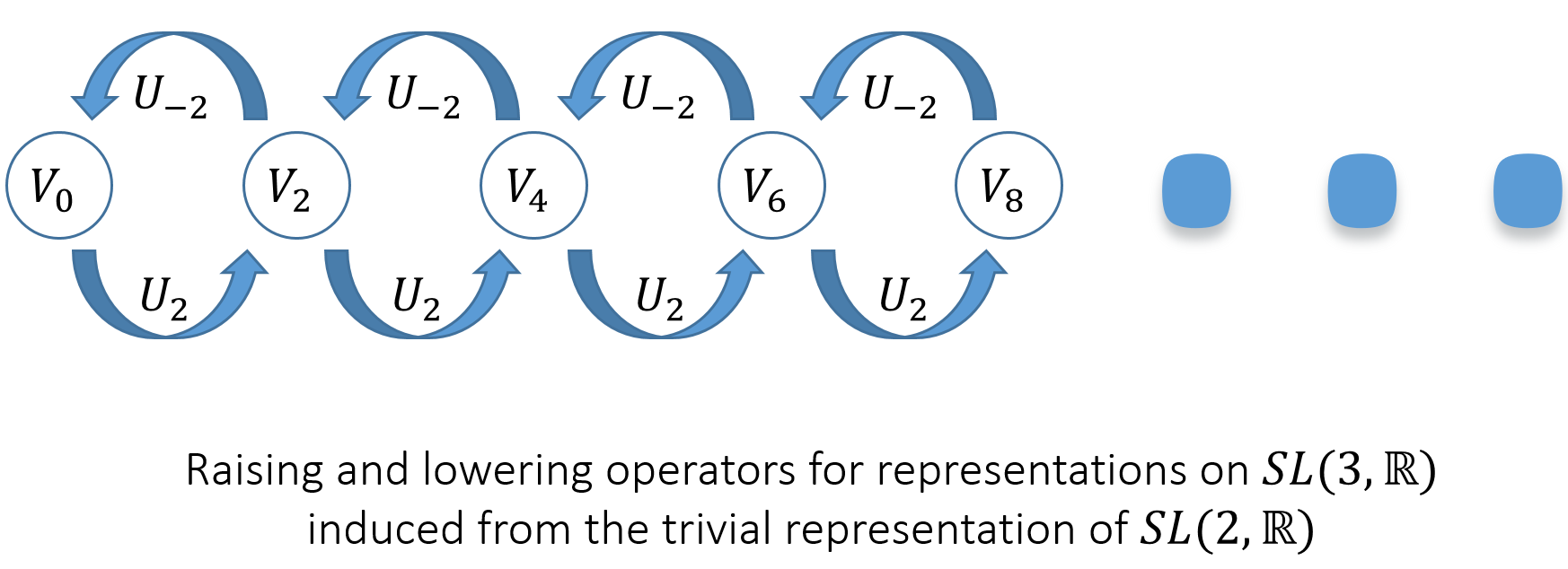}
\end{center}
\end{figure}

\subsection{Cohomological representations}

In the rest of this section we will consider the decomposition of the full principal series
\begin{equation}\label{fullprinck}
 V_{(-\f{k-1}{2},\f{k-1}2,0),(k,0,k)}\ , \ \ \ \ k\,\in\,\Z_{\ge 2}\,.
\end{equation}
  (The principal series $V_{(-\f{k-1}{2},\f{k-1}2,0),(0,k,k)}$ have a very similar analysis.)
Recall from (\ref{dldmult}) that   $V_\ell$ occurs with multiplicity $m_{\ell,\d}$, which for  $k$   even  is $1+\lfloor \f\ell2 \rfloor$ if $\ell$ is even and $ \lfloor \f\ell2 \rfloor$ if $\ell$ is odd; for $k$ odd $m_{\ell,\delta}=\lfloor \f{\ell+1}{2}\rfloor$.
The elements $
  v_{\ell,m_1,m_2}   =   D^\ell_{m_1,m_2}  +  (-1)^{\ell}  D^{\ell}_{-m_1,m_2}$ from (\ref{basisforlisotypicprincseries})
are nonzero elements of (\ref{fullprinck}) when $m_1\neq 0$ and $m_1 \equiv k\imod 2$;  $v_{\ell,0,m_2}\neq 0$ when $k$ and $\ell$ are even.

\begin{thm}\label{thm:evenk}  For even integers $k\ge 2$, the spherical representation $V_{(-\f{k-1}{2},\f{k-1}2,0),(0,0,0)}$ has 2 constituents, $V_A$ and $V_B$, which contain $V_\ell$ with the following multiplicities:
\begin{itemize}
  \item $m_{\ell,A}=\max(0,1+\lfloor \f{\ell-k}{2}\rfloor)$,   and
  \item $m_{\ell,B}=\left\{
                      \begin{array}{ll}
                        \min(\lfloor \f\ell2 \rfloor,\f{k-2}{2}), & \ell \text{ odd} \\
                        \min(1+\lfloor \f\ell2 \rfloor,\f k2), &  \ell \text{ even.}
                      \end{array}
                    \right.$
\end{itemize}
The representation $V_B$ is the subrepresentation spanned by the $v_{\ell,m_1,m_2}$  having $0\le m_1<k$ (and satisfying the above parity constraints), whereas $V_A$ is the quotient of $V_{(-\f{k-1}{2},\f{k-1}2,0),(0,0,0)}$ by $V_B$.  The dual representation $V_{-(-\f{k-1}{2},\f{k-1}2,0),(0,0,0)}$ contains $V_A$ as the subrepresentation spanned by the  $v_{\ell,m_1,m_2}$  having $  m_1\ge k$ (and satisfying the above parity constraints), and $V_B$ as a quotient.

The composition series for various Weyl orbits of $(-\f{k-1}{2},\f{k-1}{2},0)$ have the form
$\{0\} \subset V  \subset V_{\lambda,(0,0,0)}$ with:

\begin{itemize}
\item $V\cong V_A$ and $V_{\lambda,(0,0,0)}/V\cong V_B$ if $\lambda=(\f{k-1}{2},-\f{k-1}{2},0)$, $(0,\f{k-1}{2},-\f{k-1}{2})$, or $(\f{k-1}{2},0,-\f{k-1}{2})$; and
\item $V\cong V_B$ and $V_{\lambda,(0,0,0)}/V\cong V_A$ if $\lambda=(-\f{k-1}{2},\f{k-1}{2},0)$, $(0,-\f{k-1}{2},\f{k-1}{2})$, or $(-\f{k-1}{2},0,\f{k-1}{2})$.
\end{itemize}
\end{thm}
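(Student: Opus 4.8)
The plan is to exhibit $V_B$ directly as a $(\frak g,K)$-submodule inside $V_{(-\frac{k-1}{2},\frac{k-1}{2},0),(0,0,0)}$, to take $V_A$ to be the quotient, and then to carry the conclusion across the whole Weyl orbit by contragredient duality together with the intertwining operators mentioned at the start of this section.

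First I would set $\lambda=(-\frac{k-1}{2},\frac{k-1}{2},0)$, so that $\lambda_1-\lambda_2+1=2-k$, and let $V_B$ be the span of the vectors $v_{\ell,m_1,m_2}$ of (\ref{basisforlisotypicprincseries}) with $0\le m_1<k$ (hence $m_1$ even, and $\ell$ even when $m_1=0$). Since $\pi(Y_1),\pi(Y_2),\pi(Y_3)$ move only the index $m_2$ by (\ref{so3diffonWignerD}), $V_B$ is $K$-stable, and by \thmref{thm:intro} it remains to see that $\pi(Z_n)v_{\ell,m_1,m_2}$ stays in $V_B$ when $0\le m_1<k$. Reading off (\ref{introthm1}): terms with $m_1$-shift $0$ keep $m_1$ in range (and $v_{\ell+j,0,m_2+n}$ is $0$ when $\ell+j$ is odd); the only term with $m_1$-shift $+2$ that could leave $V_B$ starts at $m_1=k-2$, where its coefficient contains the vanishing factor $\Lambda^{(2)}_j(\lambda,\ell,k-2)=\lambda_1-\lambda_2+1+(k-2)=0$; and the only term with $m_1$-shift $-2$ that could leave $V_B$ occurs, for $k=2$, at $m_1=0$, where $\Lambda^{(-2)}_j(\lambda,\ell,0)=\lambda_1-\lambda_2+1=2-k=0$ (for $k\ge4$ that term lands instead on $v_{\ell+j,-2,m_2+n}=(-1)^{\ell+j}v_{\ell+j,2,m_2+n}$, which is in $V_B$ because $2<k$, using the symmetry in (\ref{basisforlisotypicprincseries}) for $\delta=(0,0,0)$). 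Thus $V_B$ is $\frak g$-stable; counting the admissible $m_1\in\{0,2,\dots\}\cap[0,\min(k-2,\ell)]$ (dropping $m_1=0$ when $\ell$ is odd) reproduces the multiplicities $m_{\ell,B}$.

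Granting the {\sc atlas} computation that $V_{\lambda,(0,0,0)}$ has composition length $2$, it follows that $V_B$ is irreducible and $V_A:=V_{\lambda,(0,0,0)}/V_B$ is irreducible with $m_{\ell,A}=m_{\ell,\delta}-m_{\ell,B}=\max(0,1+\lfloor\frac{\ell-k}{2}\rfloor)$ by (\ref{dldmult}); alternatively one can establish irreducibility of each constituent directly, by noting that inside its index set the Clebsch--Gordan factors $q_{k,j}$ and the coefficients $\Lambda^{(k)}_j$ that enter the operators $U_j$ of (\ref{Ujdef}) are nonzero, so $\pi(\frak g)$ reaches every basis vector from any other. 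For the contragredient $V_{-\lambda,(0,0,0)}\cong V_{\lambda,(0,0,0)}^{*}$, where $-\lambda=(\frac{k-1}{2},-\frac{k-1}{2},0)$ and so $\lambda_1-\lambda_2+1=k$, one instead has $\Lambda^{(-2)}_j(-\lambda,\ell,k)=0$, and the same computation shows that the span of the $v_{\ell,m_1,m_2}$ with $m_1\ge k$ is a submodule; it has multiplicities $m_{\ell,\delta}-m_{\ell,B}=m_{\ell,A}$, and since the $SO(3)$-types are self-dual the constituents of $V_{-\lambda,(0,0,0)}$ are again $V_A$ and $V_B$, so this submodule is $V_A$ with quotient $V_B$.

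Finally, for the four remaining Weyl translates $(0,\pm\frac{k-1}{2},\mp\frac{k-1}{2})$ and $(\pm\frac{k-1}{2},0,\mp\frac{k-1}{2})$ I would repeat the construction after conjugating the Euler-angle parametrization by a permutation matrix in $K=SO(3,\R)$ --- which is legitimate since $K$ acts on the Wigner $D$-matrix by a representation --- so that the coefficient $\lambda_1-\lambda_2+1$ above is replaced by $\lambda_i-\lambda_j+1$ for the coordinate pair carrying the $\pm\frac{k-1}{2}$; equivalently, one transports the statement along the standard intertwining operators, diagonalized by the Wigner basis for the simple reflection $s_1$ and by the re-parametrized Wigner-like basis for $s_2$ described at the start of this section. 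In each case the submodule so produced is $V_B$ when the relevant difference is negative and $V_A$ when it is positive, i.e.\ according to whether the first nonzero entry of $\lambda$ is negative or positive, which gives the two displayed lists; that each $V_{\lambda,(0,0,0)}$ in the orbit is uniserial of length two rather than a direct sum follows from the indecomposability of a minimal principal series of $SL(3,\R)$ with regular infinitesimal character. I expect the last step to be the main obstacle: the direct argument of the second paragraph is tied to one coordinate pair and one Euler chart, so propagating the explicit Wigner-function description over the full Weyl orbit --- while tracking the parity conditions attached to $m_1=0$ and the degenerate case $k=2$ --- is where the real bookkeeping lies.
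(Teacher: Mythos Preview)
Your argument for the main case $\lambda=(-\tfrac{k-1}{2},\tfrac{k-1}{2},0)$ and its negative is exactly the paper's: both show $V_B$ (resp.\ the span of $v_{\ell,m_1,m_2}$ with $m_1\ge k$) is $\frak g$-stable by observing that $\Lambda^{(2)}_j(\lambda,\ell,k-2)=0$ and $\Lambda^{(-2)}_j(-\lambda,\ell,k)=0$ in (\ref{Lambda})--(\ref{liealgdiffZj}), and both invoke {\sc atlas} (equivalently Beilinson--Bernstein) for the length-two assertion.

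The only real difference is in propagating the conclusion over the Weyl orbit. The paper simply cites {\sc atlas} again for $\lambda=\pm(\tfrac{k-1}{2},0,-\tfrac{k-1}{2})$ and then appeals to duality and the contragredient symmetry $g\mapsto (g^t)^{-1}$ (which sends $\lambda$ to $(-\lambda_3,-\lambda_2,-\lambda_1)$) to cover the remaining translates. Your proposal to instead conjugate the Euler chart by a permutation matrix in $K$, or equivalently to transport along the simple intertwining operators as described at the start of the section, is a legitimate alternative and has the advantage of staying within the Wigner-function framework; the cost, as you correctly flag, is the bookkeeping in tracking the parity conditions and the degenerate $k=2$ case across charts. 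Your aside that irreducibility of the constituents could be checked directly from nonvanishing of $q_{k,j}$ and $\Lambda^{(k)}_j$ is plausible but would need more care than you give it; the paper does not attempt this and simply uses the length-two input.
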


\noindent Note that when $k=2$, $V_B$ coincides with the $s=0$ case of Section~\ref{sec:sub:inducedtriv}.

\begin{figure}[t]\label{fig:2}
\caption{A schematic illustration of Theorem~\ref{thm:evenk}.}
\begin{center}
\includegraphics[width=5.3in]{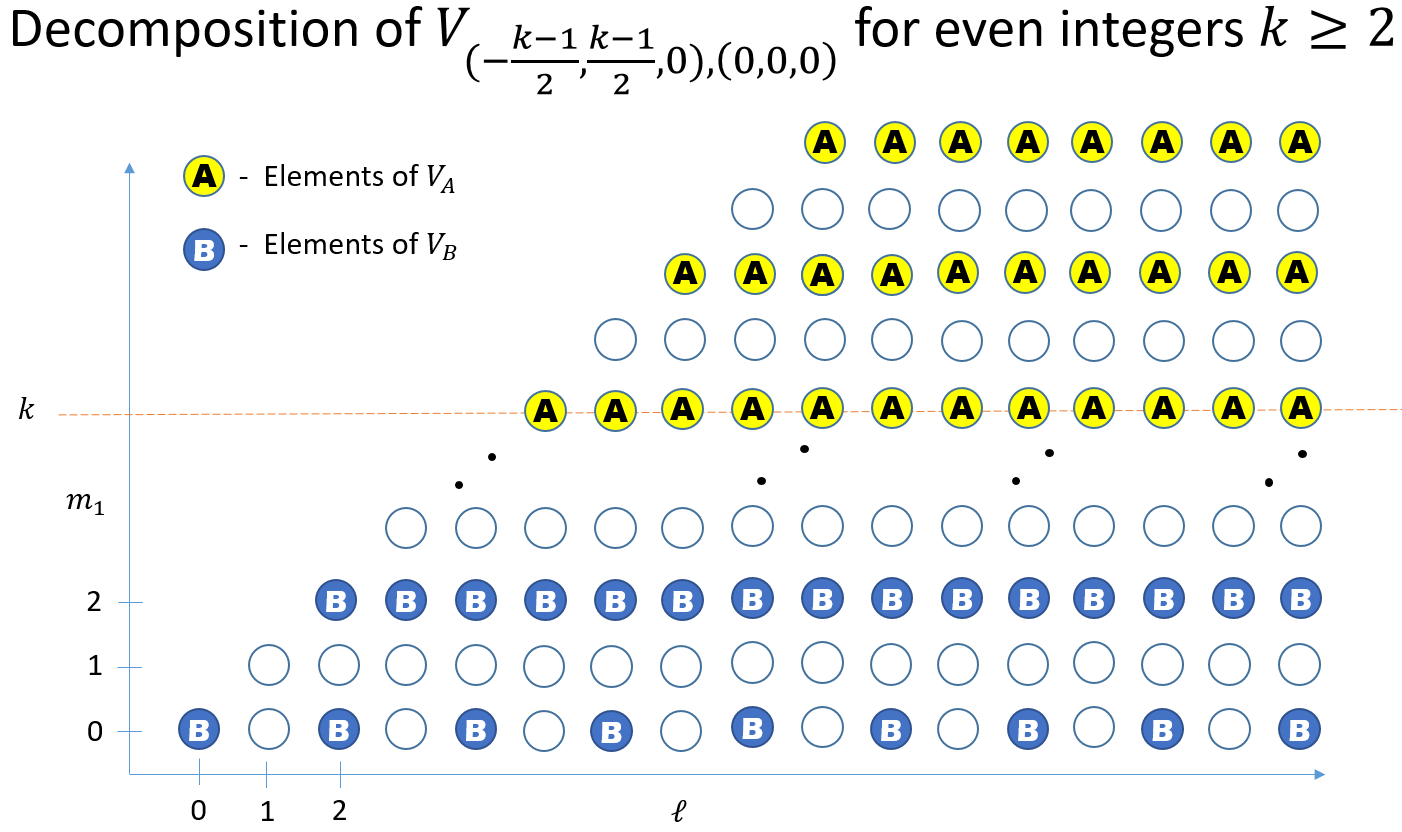}
\end{center}
\end{figure}

\begin{proof}
The fact that $V_{(-\f{k-1}{2},\f{k-1}2,0),(0,0,0)}$ has a composition series of length 2   for $k\in 2\Z_{\ge 1}$ is a standard consequence of the Beilinson-Bernstein theory.  Both this and the assertions about the composition series for $\l=\pm(\f{k-1}{2},0,-\f{k-1}{2})$
can be directly verified through it  using the {\tt atlas} software package \cite{atlas}.  Let $W$ be the subspace of $V_{(-\f{k-1}{2},\f{k-1}2,0),(0,0,0)}$ spanned by its basis vectors $v_{\ell,m_1,m_2}$ with $m_1<k$.
 Since $\L_j^{(2)}((-\f{k-1}{2},\f{k-1}2,0),\ell,k-2)$ and $\L_j^{(-2)}((-\f{k-1}{2},\f{k-1}2,0),\ell,2-k)$ in (\ref{Lambda}) both vanish, formula (\ref{liealgdiffZj}) shows that $\pi(Z_n)$ preserves $W$, which is consequently irreducible.
 Similarly, the subspace of $V_{-(-\f{k-1}{2},\f{k-1}2,0),(0,0,0)}$ spanned by its basis vectors $v_{\ell,m_1,m_2}$ with $m_1\ge k$ is also irreducible.
 The multiplicity formulas follow from this, as does  the composition series assertion for $\l=(-\f{k-1}{2},\f{k-1}2,0)$.  The remaining three   composition series assertions follow by duality and the contragredient symmetry.
\end{proof}

\subsection{Constant coefficients cuspidal cohomology}

We next turn to the representation (\ref{fullprinck}) for $k=3$,
which is spanned by the $v_{\ell,m_1,m_2}$ with $\ell>0$, $m_1$ odd, and $-\ell\le m_1,m_2 \le \ell$.  It has a composition series
\begin{equation}\label{constantcoeffcohom1}
  \{0\} \ \ \subset \ \ V^{(1)}_{\text{odd}} \ \ \subset \ \ V^{(1)} \ \ \subset \ \ V_{(-1,1,0),(1,0,1)}
\end{equation}
of length 3, where $V^{(1)}$ is spanned by the $v_{\ell,1,m_2}$ with $\ell>0$ and $V^{(1)}_{\text{odd}}$ is spanned by the $v_{\ell,1,m_2}$ with $\ell>0$ and odd.  As in Theorem~\ref{thm:evenk}, it follows from (\ref{Lambda})-(\ref{liealgdiffZj})  that $V^{(1)}_{\text{odd}}$ and $V^{(1)}/V^{(1)}_{\text{odd}}$ are irreducible.

 The quotient $V_{(-1,1,0),(1,0,1)}/V^{(1)}$ is the archimedean representation associated to constant coefficients automorphic cohomology (it is the symmetric square $\bigwedge\!D_2$ of the discrete series $D_2$ of $SL(2,\R)$).  By duality, $\bigwedge\!D_2$ is an irreducible subspace of $V_{(1,-1,0),(1,0,1)}$, which can be shown as above to be spanned by the $v_{\ell,m_1,m_2}$ with $\ell>0$ and  $m_1\ge 3$  odd.

\begin{figure}[t]\label{fig:3}
\caption{An illustration  the cohomological representation $\bigwedge\!D_2$    as a quotient of $V_{(-1,1,0),(1,0,1)}$.}
\begin{center}
\includegraphics[width=5.3in]{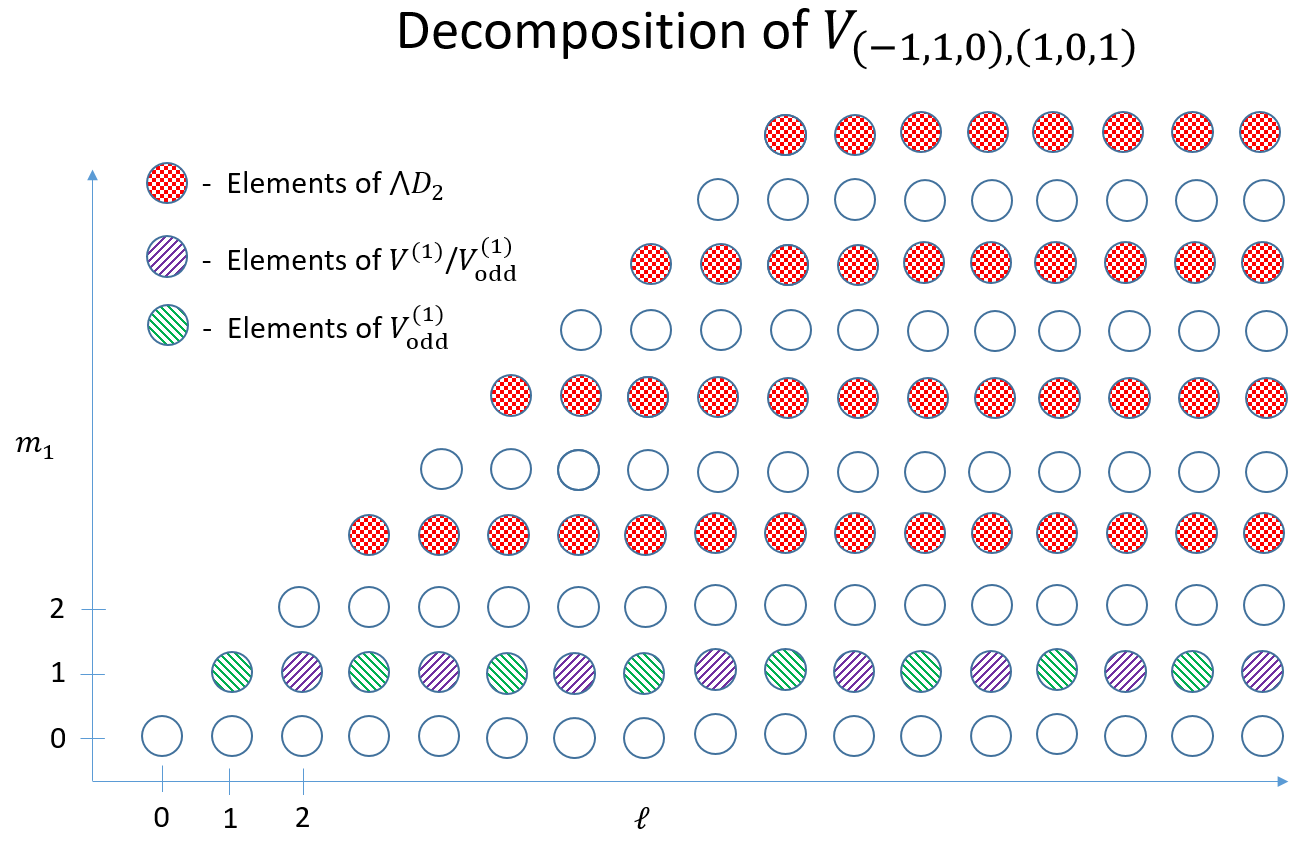}
\end{center}
\end{figure}

\subsection{Symmetric square of Ramanujan's cusp form $\Delta$}

Our final example is (\ref{fullprinck}) for $k=23$.  Its dual $V_{(11,-11,0),(1,0,1)}$ contains the irreducible representation $\bigwedge\!D_{12}$, which is the archimedean representation associated to the symmetric square of Ramanujan's famous weight 12 cusp form $\Delta$.  This subspace of $V_{(11,-11,0),(1,0,1)}$  is spanned by the $v_{\ell,m_1,m_2}$ with $\ell\ge 23$  and $m_1\ge 23$ odd.  The other composition factors in these principal series are more complicated to describe.

\section{Formulas for action as differential operators on $G/K$}
\label{sec:diffopformulas}

With applications of the previous results to automorphic forms and string theory in mind, it may be useful to write the action of the differential operators  (\ref{liealgdiffZj}) on functions on the symmetric space $SL(3,\R)/SO(3)$.  We first review this for $SL(2,\R)$ (where the results are well-known and classical).  In this section $\pi$ will be used to denote the right translation operator.

\subsection{$SL(2,\R)/SO(2)$}

Consider the commonly-used coordinate parametrization
\begin{multline}\label{sl2rcoordinates1}
    g \ \ = \ \ \ttwo1x01 \ttwo{y^{1/2}}{0}{0}{y^{-1/2}}\ttwo{\cos(\theta)}{-\sin(\theta)}{\sin(\theta)}{\cos\theta} \ \ \in \ \
SL(2,\R)\,, \\ x\,\in\,\R\,,\ y\,>\,0\,, \ \ \theta\,\in\,\R/(2\pi\Z)\,,
\end{multline}
of $G=SL(2,\R)$.   Let $f$ be an $SO(2)$-isotypic function on $G$, meaning it satisfies the right-transformation law
\begin{equation}\label{sl2coordinates2}
    f\(g\ttwo{\cos(\theta)}{-\sin(\theta)}{\sin(\theta)}{\cos(\theta)}\) \ \ = \ \ e^{i\ell\theta}\,f(g)\,,\ \text{for all} \ \theta\,\in\,\R\,,
\end{equation}
or, equivalently, the differential equation
\begin{equation}\label{sl2diffops1}
    \pi\ttwo{0}{-1}{1}{0}f \ \ = \ \ i\,\ell\, f
\end{equation}
for some $\ell\in\Z$.
Then $f$ is determined by its restriction   to upper triangular matrices $\ttwo1x01 \ttwo{y^{1/2}}{0}{0}{y^{-1/2}}$, and the action of the rest of the Lie algebra  on $f$ is determined by the formulas
\begin{equation}\label{sl2diffops2}
\aligned
    \(\pi\ttwo{1}{0}{0}{-1}\!f\)\!\(\ttwo1x01 \ttwo{y^{1/2}}{0}{0}{y^{-1/2}}\) \ \ & = \ \
2\,y\,\smallf{\partial}{\partial y}\,f\(\ttwo1x01 \ttwo{y^{1/2}}{0}{0}{y^{-1/2}}\) \\ \text{and} \ \ \ \
\(\pi\ttwo0100 \! f\)\!\(\ttwo1x01 \ttwo{y^{1/2}}{0}{0}{y^{-1/2}}\) \ \ & = \ \ y\,\smallf{\partial}{\partial x}\,f\(\ttwo1x01 \ttwo{y^{1/2}}{0}{0}{y^{-1/2}}\)
\endaligned
\end{equation}
via linear combinations.  For example,   the raising and lowering operators in (\ref{sl2updownaction}) correspond to the differential operators
\begin{equation}\label{sl2diffops3}
  \(  \pi\ttwo{1}{\mp i}{\mp i}{-1}\!f\)\!\(\ttwo1x01 \ttwo{y^{1/2}}{0}{0}{y^{-1/2}}\)  \  = \
(\pm 2iy\smallf{\partial}{\partial x}+  2y\smallf{\partial}{\partial y} \pm \ell )f\(\ttwo1x01 \ttwo{y^{1/2}}{0}{0}{y^{-1/2}}\)
\end{equation}
introduced by Maass.

\subsection{$SL(3,\R)/SO(3)$}

We parameterize elements of $G=SL(3,\R)$ as
\begin{equation}\label{sl3coordinates1}
    g \ \ = \ \ \tthree{1}{x_1}{x_3}01{x_2}001\tthree{y_1^{2/3}y_2^{1/3}}000{y_1^{-1/3}y_2^{1/3}}000{y_1^{-1/3}y_2^{-2/3}} k(\a,\b,\g) \ \ \in \ \ G\,,
\end{equation}
where $x_1,x_2,x_3\in \R$, $y_1,y_2>0$, and $k(\a,\b,\g)\in K=SO(3,\R)$ is defined in  (\ref{eulerangles}).
Assume that $f$ is an $SO(3)$-isotypic function on $G$ satisfying the transformation law
\begin{equation}\label{sl2coordinates2}
    f\(g\tthree{\cos(\theta)}{-\sin(\theta)}0{\sin(\theta)}{\cos(\theta)}0001\) \ \ = \ \ e^{im_2\theta}\,f(g)\,,\ \text{for all} \ \theta\,\in\,\R\,,
\end{equation}
which is the case for $D^{\ell}_{m_1,m_2}$ (see (\ref{WignerDdef})) and hence the image of a $K$-isotypic vector corresponding to $v_{\ell,m_1,m_2}$ in an irreducible representation of $G$.
We refer back to (\ref{sl3liealg}) for notation of Lie algebra elements.
\begin{lem}\label{lem:diffops}
Let $g(x_1,x_2,x_3,y_1,y_2)$ denote the product of the first two matrices in (\ref{sl3coordinates1}).  Then
\begin{equation}\label{diffopssl3a}
 ( \pi(X)f ) (g(x_1,x_2,x_3,y_1,y_2)) \ \  = \ \ \mathcal{D}f(g(x_1,x_2,x_3,y_1,y_2))
\end{equation}
for the following pairs of Lie algebra elements $X$ and differential operators $\mathcal{D}$:
\begin{equation}\label{diffopssl3b}
\begin{tabular}{|c|c|}
  \hline
 X & $\mathcal{D}$ \\
  \hline
  $Y_1$ & $im_2$ \\
  $H_1$ & $2y_1\f{\partial}{\partial y_1}-y_2\f{\partial}{\partial y_2}$ \\
  $H_2$ & $-y_1\f{\partial}{\partial y_1}+2y_2\f{\partial}{\partial y_2}$ \\
  $X_1$ & $y_1\f{\partial}{\partial x_1}$ \\
  $X_2$ & $y_2\f{\partial}{\partial x_2}+x_1y_2\f{\partial}{\partial x_3}$ \\
  $X_3$ & $y_1y_2\f{\partial}{\partial x_3}$ \\
  $Z_{-2}$ & $-m_2
+ 2iy_1\f{\partial}{\partial x_1}+2y_1\f{\partial}{\partial y_1}-y_2\f{\partial}{\partial y_2}$ \\
  $Z_0$ & $\sqrt{6}y_2\f{\partial}{\partial y_2}$ \\
  $Z_{2}$ & $m_2
- 2iy_1\f{\partial}{\partial x_1}+2y_1\f{\partial}{\partial y_1}-y_2\f{\partial}{\partial y_2}$ \\
  \hline
\end{tabular}
\end{equation}
\end{lem}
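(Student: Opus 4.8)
The plan is to reduce everything to a direct computation in the coordinates (\ref{sl3coordinates1}), exactly as in the $SL(2,\R)$ case treated just above. The starting point is the observation that, because $f$ is determined by its restriction to the upper-triangular part $g(x_1,x_2,x_3,y_1,y_2)$ together with the transformation law under $SO(3)$, it suffices to express each Lie algebra derivative $\pi(X)f$ evaluated at a point $g_0=g(x_1,x_2,x_3,y_1,y_2)$ (i.e.\ with Euler angles $\a=\b=\g=0$, so that the $SO(3)$-factor is the identity) as a differential operator in the five variables $x_1,x_2,x_3,y_1,y_2$ acting on $f|_{g(\cdot)}$. For this one writes $g_0 X = X'\, g_0$ where $X' = \mathrm{Ad}(g_0)X = g_0 X g_0^{-1} \in \fg$, and then decomposes $X' = X'_{\fn} + X'_{\fa} + X'_{\fk}$ via the Iwasawa splitting $\fg = \fn\oplus\fa\oplus\fk$ (cf.\ (\ref{kXki})). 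The point is that $\left.\frac{d}{dt}\right|_{t=0} f(e^{tX'}g_0)$ splits into three pieces: the $\fn$-piece contributes the $x$-derivatives of $f$ along $g(\cdot)$ (since left translation by $N$ moves the $x$-coordinates), the $\fa$-piece contributes $y$-derivatives, and the $\fk$-piece contributes $im_2$ times a multiple of $f$ by the isotypy condition (\ref{sl2coordinates2}) applied through (\ref{leftderiv}) — only the $Y_1$-component survives when the Euler angles are $0$.

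First I would record, for each of $Y_1, H_1, H_2, X_1, X_2, X_3$, the matrix $g_0 X g_0^{-1}$; these are the truly elementary cases. Conjugating $H_1, H_2$ by the diagonal matrix is trivial (they are diagonal and commute with it), so their left action is pure scaling in $y_1,y_2$, giving the stated $2y_1\partial_{y_1}-y_2\partial_{y_2}$ and $-y_1\partial_{y_1}+2y_2\partial_{y_2}$; conjugating $X_1,X_3$ just rescales by a power of the $y_i$ and reshuffles strictly-upper-triangular entries, producing $y_1\partial_{x_1}$ and $y_1y_2\partial_{x_3}$, while $X_2$ picks up the extra $x_1y_2\partial_{x_3}$ term from the unipotent factor $\tthree{1}{x_1}{x_3}{0}{1}{x_2}{0}{0}{1}$. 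The $Y_1$ case is immediate from (\ref{so3diffonWignerD}) (or directly from $Y_1 = \partial/\partial\g$ in (\ref{so3differentiation})). The remaining three rows, $Z_{-2}, Z_0, Z_2$, then follow by linearity: I would use the explicit expressions (\ref{sl3liealg}) for $Z_{\pm2}$ and $Z_0$ to write each as a combination of $H_1, X_1, Y_1$ (indeed $Z_{\pm2} = H_1 \mp i(X_1 - X_{-1}) = H_1 \pm i Y_1$ up to the conventions there, and $Z_0 = \sqrt{2/3}\,(H_1 + 2H_2)$ or the equivalent), and simply add the corresponding differential operators already computed, invoking $\pi(Y_1)f = im_2 f$ to turn the $Y_1$-contribution into the constant terms $-m_2$, $+m_2$ appearing in the $Z_{\mp2}$ rows.

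The only mild subtlety — and the place I would be most careful — is bookkeeping of the Iwasawa-coordinate conventions: the factor $\tthree{y_1^{2/3}y_2^{1/3}}{}{}{}{y_1^{-1/3}y_2^{1/3}}{}{}{}{y_1^{-1/3}y_2^{-2/3}}$ is chosen so that $H_1$ and $H_2$ act as the clean operators in the table, and one must check that conjugation of $X_1$ by this diagonal matrix produces exactly $y_1$ (not some other monomial in $y_1,y_2$) and likewise $y_1 y_2$ for $X_3$ and $y_2$ for the leading term of $X_2$; this is a one-line exponent computation but is the natural spot for sign or exponent errors. Everything else is routine linear algebra with $3\times3$ matrices, and no appeal to \thmref{thm:main} is needed — the lemma is really a change-of-model statement, translating the $K$-picture of Sections~\ref{sec:wignerso3}--\ref{sec:gKmodulestructure} into the $G/K$-picture, in complete parallel with (\ref{sl2diffops1})--(\ref{sl2diffops3}).
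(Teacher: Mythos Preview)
The paper states this lemma without proof; it is presented as a routine computation in coordinates, analogous to the $SL(2,\R)$ formulas (\ref{sl2diffops2})--(\ref{sl2diffops3}) immediately preceding it.  Your overall strategy --- compute $g_0 e^{tX}$ directly in Iwasawa coordinates for $X\in\{H_1,H_2,X_1,X_2,X_3\}$, use the right-isotypy (\ref{sl2coordinates2}) for $Y_1$, and then obtain $Z_0,Z_{\pm 2}$ by linearity --- is exactly right and is the natural way to fill in the omitted details.

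One genuine slip: your decomposition of $Z_{\pm 2}$ is incorrect.  You write $Z_{\pm2} = H_1 \mp i(X_1 - X_{-1}) = H_1 \pm i Y_1$, but from (\ref{sl3liealg}) one has
\[
Z_{\pm 2} \ = \ H_1 \ \mp\ i\,(X_1+X_{-1}) \ = \ H_1 \ \mp\ 2iX_1 \ \mp\ iY_1
\]
(using $X_{-1}=X_1+Y_1$).  Your version drops the $\mp 2iX_1$ term entirely, and applying the table rows for $H_1$ and $Y_1$ alone would yield $\pm m_2 + 2y_1\partial_{y_1}-y_2\partial_{y_2}$, missing exactly the $\mp 2iy_1\partial_{x_1}$ contribution in the stated formula.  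This is easy to fix once noticed, but as written the $Z_{\pm 2}$ rows would not come out.

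A smaller point: the Iwasawa-of-$\mathrm{Ad}(g_0)X$ framework you set up in the first paragraph is valid but heavier than needed here.  For $X\in\fn\oplus\fa$ the product $g_0 e^{tX}$ already lies in $NA$, so one can read off the new coordinates directly (as you in fact do in the second paragraph) without ever passing to $\mathrm{Ad}(g_0)X$ or worrying about a $\fk$-component; the $\fk$-part of your general scheme would require knowing how $f$ behaves under \emph{left} $K$-translation, which the hypothesis (\ref{sl2coordinates2}) does not provide.  This causes no trouble for the elements in the table precisely because they all lie in $\fn\oplus\fa\oplus\C Y_1$.
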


\noindent
Expressions for $Y_2$ and $Y_3$ (and hence $Z_{\pm 1}$) can also be given, but they are more complicated in the absence of an assumption such as  (\ref{sl2coordinates2}).

\begin{bibsection}
\begin{biblist}

\bib{AS}{book}{
   author={Abramowitz, Milton},
   author={Stegun, Irene A.},
   title={Handbook of mathematical functions with formulas, graphs, and
   mathematical tables},
   series={National Bureau of Standards Applied Mathematics Series},
   volume={55},
   publisher={For sale by the Superintendent of Documents, U.S. Government
   Printing Office, Washington, D.C.},
   date={1964},
   pages={xiv+1046},
   review={\MR{0167642}},
note={\url{http://people.math.sfu.ca/~cbm/aands/toc.htm}}
}

\bib{atlas}{book}{title={Atlas of Lie Groups and Representations},note={\url{http://www.liegroups.org/}},}

\bib{Bargmann}{article}{
   author={Bargmann, V.},
   title={Irreducible unitary representations of the Lorentz group},
   journal={Ann. of Math. (2)},
   volume={48},
   date={1947},
   pages={568--640},
   issn={0003-486X},
   review={\MR{0021942}},
}

\bib{bied}{book}{
   author={Biedenharn, L. C.},
   author={Louck, J. D.},
   title={Angular momentum in quantum physics},
   series={Encyclopedia of Mathematics and its Applications},
   volume={8},
   note={Theory and application;
   With a foreword by Peter A. Carruthers},
   publisher={Addison-Wesley Publishing Co., Reading, Mass.},
   date={1981},
   pages={xxxii+716},
   isbn={0-201-13507-8},
   review={\MR{635121}},
}
\bib{buttcane}{article}{author={Buttcane, Jack},title={Higher weight on GL(3), II: The cusp forms},note={Preprint (2017). arXiv:1701.04380},}

\bib{casselman}{article}{
   author={Casselman, W.},
   title={Jacquet modules for real reductive groups},
   conference={
      title={Proceedings of the International Congress of Mathematicians
      (Helsinki, 1978)},
   },
   book={
      publisher={Acad. Sci. Fennica, Helsinki},
   },
   date={1980},
   pages={557--563},
   review={\MR{562655}},
}

\bib{Gelfand-Graev}{book}{
   author={Gel{\cprime}fand, I. M.},
   author={Graev, M. I.},
   author={Pyatetskii-Shapiro, I. I.},
   title={Generalized functions. Vol. 6},
   note={Representation theory and automorphic functions;
   Translated from the 1966 Russian original [ MR0220673] by K. A. Hirsch;
   Reprint of the 1969 English translation [ MR0233772]},
   publisher={AMS Chelsea Publishing, Providence, RI},
   date={2016},
   pages={xvi+426},
   isbn={978-1-4704-2664-4},
   review={\MR{3468638}},
}

\bib{GLMZ}{article}{author={Gustafsson, Henrik},
author={Liu, Baiying},
author={Miller, Stephen D.},
author={Zhang, Zhuohui}, title={in progress}}

\bib{howe}{article}{
   author={Howe, Roger},
   title={$K$-type structure in the principal series of $\rm GL_3$. I},
   conference={
      title={Analysis on homogeneous spaces and representation theory of Lie
      groups, Okayama--Kyoto (1997)},
   },
   book={
      series={Adv. Stud. Pure Math.},
      volume={26},
      publisher={Math. Soc. Japan, Tokyo},
   },
   date={2000},
   pages={77--98},
   review={\MR{1770718}},
}

\bib{howelee}{article}{
   author={Howe, Roger},
   author={Lee, Soo Teck},
   title={Degenerate principal series representations of ${\rm GL}_n(\bold C)$ and ${\rm GL}_n(\bold R)$},
   journal={J. Funct. Anal.},
   volume={166},
   date={1999},
   number={2},
   pages={244--309},
   issn={0022-1236},
   review={\MR{1707754}},
   doi={10.1006/jfan.1999.3427},
}

\bib{hurwitz}{article}{author={Hurwitz, Adolf}, title={\"Uber die Erzeugung der Invarienten durch Integration}, journal={Nachrichten Ges. Wiss. G\"ottingen}, year={1897}, pages={71--90},}

\bib{miyazaki}{article}{
   author={Miyazaki, Tadashi},
   title={The structures of standard $(\germ g,K)$-modules of ${\rm
   SL}(3,\bold R)$},
   journal={Glas. Mat. Ser. III},
   volume={43(63)},
   date={2008},
   number={2},
   pages={337--362},
   issn={0017-095X},
   review={\MR{2460704}},
   doi={10.3336/gm.43.2.08},
}

\bib{NIST}{book}{title={NIST Digital Library of Mathematical Functions},note={\url{http://dlmf.nist.gov/}},}

\bib{Zhang}{thesis}{author={Zhang, Zhuohui}, title={Rutgers University Ph.D. Dissertation} year={2017}}

\end{biblist}
\end{bibsection}

\end{document}